
\documentclass{article}%
\usepackage{amsmath}
\usepackage{amsfonts}
\usepackage{amssymb}
\usepackage{graphicx}
\usepackage{epsfig}
\usepackage{color}
\usepackage{tabularx,ragged2e,booktabs,caption}%
\setcounter{MaxMatrixCols}{30}
\providecommand{\U}[1]{\protect\rule{.1in}{.1in}}

\newcommand{\e}{\ensuremath{\varepsilon}}

\newcommand{\TT}{\ensuremath{\mathbb{T}^2}}
\newtheorem{theorem}{Theorem}

\newtheorem{definition}[theorem]{Definition}

\newtheorem{lemma}[theorem]{Lemma}

\newtheorem{proposition}[theorem]{Proposition}
\newtheorem{remark}[theorem]{Remark}

\newenvironment{proof}[1][Proof]{\noindent\textbf{#1.} }{\ \rule{0.5em}{0.5em}}

\begin{document}

\title{The onset of layer undulations in smectic A liquid crystals due to a strong
magnetic field}
\author{A. Contreras\thanks{Department of Mathematical Sciences,
New Mexico State University,
1290 Frenger Mall,
MSC 3MB. Science Hall 225,
Las Cruces, New Mexico 88003-8001. E-mail: acontre@nmsu.edu}, 
C. Garcia-Azpeitia\thanks{Departamento de Matem\'{a}ticas, Facultad de Ciencias, Universidad Nacional Aut\'{o}noma de M\'{e}xico.
Circuito Exterior S/N, Cd. Universitaria, Colonia Copilco el Bajo,
Delegación Coyoac\'{a}n, M\'{e}xico D.F.,\; M\'{e}xico. \;C.P. 04510. E-mail: cgazpe@ciencias.unam.mx}, 
C.J. Garc{\'\i}a-Cervera\thanks{Department of Mathematics,
South Hall, Room 6707,
University of California,
Santa Barbara, CA 93106-3080.
 USA. E-mail: cgarcia@math.ucsb.edu}, 
S. Joo.\thanks{Department of Mathematics \& Statistics,
Old Dominion University,
2313 Engr. and Comp. Sci. Bldg.,
Norfolk, VA 23529 . Norfolk, VA 23529. USA. E-mail: sjoo@odu.edu}}
\maketitle

\begin{abstract}
We investigate the effect of a strong magnetic field on a three dimensional
smectic A liquid crystal. We identify a critical field above which the uniform
layered state loses stability; this is associated to the onset of layer
undulations. In a previous work \cite{G-J2}, Garc{\'{\i}}a-Cervera and Joo
considered the two dimensional case and analyzed the transition to the
undulated state via a simple bifurcation. In dimension $n=3$ the situation is
more delicate because the first eigenvalue of the corresponding linearized
problem is not simple. We overcome the difficulties inherent to this higher
dimensional setting by identifying the irreducible representations for natural
actions on the functional that take into account the invariances of the
problem thus allowing for reducing the bifurcation analysis to a subspace with
symmetries. We are able to describe at least two bifurcation branches, one of
which is stable, highlighting the richer landscape of energy critical states
in the three dimensional setting. Finally, we analyze a reduced two
dimensional problem, assuming the magnetic field is very strong, and are able
to relate this to a model in micromagnetics studied in \cite{Alouges-Serfaty},
from where we deduce the periodicity property of minimizers.

\end{abstract}

\section{Introduction}

The main thrust of this work is to understand the undulation phenomena in
smectic A liquid crystals caused by a strong magnetic field. The mathematical
framework for our study is the model introduced by de Gennes \cite{dG}. The
Landau de Gennes energy describes the state of the liquid crystal in terms of
a director $n$ and a complex order parameter
\[
\Psi=\rho(x)e^{iq\omega(\mathbf{x})},
\]
for the layered structure. The molecular mass density is defined by
\[
\delta(x)=\rho_{0}(x)+\frac{1}{2}(\Psi(x)+\Psi^{\ast}(x))=\rho_{0}%
(x)+\rho(x)\cos q\omega(x),
\]
where $\rho_{0}$ is a locally uniform mass density, $\rho(x)$ is the mass
density of the smectic layers, and $\omega$ parametrizes the layers so that
$\nabla\omega$ is the direction of the layer normal. Also, $q$ is the wave
number and $2\pi/q$ is the layer thickness. The state $\Psi\equiv0$ corresponds to no layered structure (the sample is in a nematic phase).
If $|\Psi|=\rho$ is a nonzero constant, then the sample is in a smectic state
throughout. The contribution to the energy due to the external
magnetic field is accounted for by the magnetic free energy density%
\[
-\chi_{a}H^{2}(n\cdot h)^{2},
\]
where $\chi_{a}$ is the magnetic anisotropy, $H$ is the magnitude of the
magnetic field and $h$ is a unit vector representing the direction of the
field. We assume that $\chi_{a}<0$. As a consequence, the director has a
preferred orientation perpendicular to the direction of the applied magnetic
field. \footnote{In \cite{G-J2}, $\chi_{a}$ is assumed to be positive; in that
case the preferred orientation for the director is parallel to the field which
fully specifies the orientation in that $2d$ setting.}

In the one constant approximation case for the Oseen-Frank nematic energy, the
total free energy for smectic A is given by,
\begin{equation}
F_{0}(\Psi,n)=\int_{\Omega}C\left\vert \nabla\Psi-iqn\Psi\right\vert
^{2}+K\left\vert \nabla n\right\vert ^{2}+\frac{g_{0}}{2}\left(  \left\vert
\Psi\right\vert ^{2}-\frac{r}{g_{0}}\right)  ^{2}-\chi_{a}H^{2}(n\cdot
h)^{2}\text{,} \label{mcG}%
\end{equation}
where the material parameters $C,K,g_{0}$ and temperature dependent parameter
$r=T_{NA}-T$ are fixed positive constants and
\[
\Omega=(-L_{1},L_{1})\times(-L_{2},L_{2})\times(-d_{0},d_{0})
\]
is a rectangular box. This models a smectic A liquid crystal with smectic
layers parallel to the {top and bottom }boundaries.

In our study we assume that the magnetic field is applied in the
$z$-direction, perpendicular to the layers, that is $h=e_{3}.$With the
director constraint $|n|=1$, we rewrite the magnetic energy density
\[
f_{m}=-|\chi_{a}|H^{2}(n_{1}^{2}+n_{2}^{2}),
\]
by omitting the constant term.

\subsection{Nondimensionalization}

We define the dimensionless order parameter, $\psi= \sqrt{\frac{g_{0}}{r}}
\Psi$, and do the change of variables $\bar{\mathbf{x}}=\mathbf{x}/d$ with
$d=2d_{0}/\pi$, and obtain the following nondimensionalized energy
\begin{equation}
F(\psi,n):=\frac{\varepsilon}{dK}F_{0}(\psi,n)=\int_{\Omega}\frac
{1}{\varepsilon}\left\vert c\,\varepsilon\nabla\psi-in\psi\right\vert
^{2}+\varepsilon\left\vert \nabla n\right\vert ^{2}+\frac{g}{2\varepsilon
}(1-\left\vert \psi\right\vert ^{2})^{2}-\tau(n_{1}^{2}+n_{2}^{2})\text{,}
\label{energy}%
\end{equation}
where
\begin{equation}
\label{parameters}\varepsilon=\frac{\lambda}{d}\sqrt{\frac{g_{0}}{r}}%
,\quad\lambda=\sqrt{\frac{K}{Cq^{2}}},\quad\tau=\frac{|\chi_{a}|H^{2}%
d^{2}\varepsilon}{K},\quad c=\sqrt{\frac{Cr}{Kg_{0}}},\quad g=\frac{r}{Cq^{2}%
},
\end{equation}
and $\Omega=(-\pi/a,\pi/a)\times(-\pi/b,\pi/b)\times(-\pi/2,\pi/2)$ with
$a=2d_{0}/L_{1}$ and $b=2d_{0}/L_{2}$. The values $d=1mm$ and $\lambda
=20\mathring{A}$ are employed in \cite{dG-P}. The dimensionless parameter
$\varepsilon$ is in fact the ratio of the layer thickness to the sample
thickness. The case $\varepsilon\ll1$ corresponds to a lower dimensional approximation and this is the point of view in the works \cite{G-J2, Tiz, ChengPhi, Gol, RenWei} where the emergence of interesting structures is captured in minimizers via a $\Gamma$-convergence approach. We point out that his kind of variational dimension reduction has already been considered in the analysis of the related Ginzburg-Landau model \cite{CS,C,CL}. One strength of our approach is that we do not require $\varepsilon$ to be small but just that it avoids values at which resonances occur. The other advantage is that the branches we find do not only start near the lowest critical field but at higher bifurcation values as well, so that metastable bifurcations are also obtained.

\subsection{The onset of layer undulations}

We are interested in studying the stability of the uniformly layered state:
\[
(\psi_{0},\mathbf{n}_{0})\equiv(e^{iz/c\varepsilon},\mathbf{e}_{3}).
\]
To that end, we impose periodic boundary conditions for $\psi$ and
$\mathbf{n}$ in the $x$ and $y$ directions, while we assume Dirichlet boundary
conditions for $\psi$ and $\mathbf{n}$ on the boundary plates:
\begin{equation}
\psi(x,y,\pm\pi/2)=e^{i(\pm\pi/2)c\varepsilon}\qquad\mbox{and}\qquad
n(x,y,\pm\pi/2)=\mathbf{e}_{3}\text{,} \label{boundary-n}%
\end{equation}
for all $(x,y)\in\lbrack-\pi/a,\pi/a]\times\lbrack-\pi/b,\pi/b]$. With this
boundary condition, we assume that $\Omega=T^{2}\times I$, where $T^{2}$ is
the two torus identified with $[-\pi/a,\pi/a]\times\lbrack-\pi/b,\pi/b]$ and
$I=[-\pi/2,\pi/2]$.

It is known that the application of an electric or magnetic field of large
enough strength, or bare mechanical tension \cite{GePr, GaCJo, IsLa, SeSmLa,
Si, St98, StSt}, can trigger the emergence of undulations in the smectic
layers: this is called the Helfrich-Hurault effect \cite{He, Hu}. In
\cite{G-J2}, the effect of a magnetic field on a $2d$ smectic A liquid crystal
was considered; a threshold value of the intensity, above which periodic layer
undulations are observed, was estimated and a nontrivial solution curve was
found. The proof relied on Crandall-Rabinowitz bifurcation theory for simple
eigenvalues and thus the solution curve branches off of the lowest eigenvalue. In our situation, the
spectral analysis of the linearized operator about the uniformly layered state
gives a lowest eigenvalue which is not simple and so we cannot follow the same
route to study the instability of the undeformed state at the critical
magnetic field. Instead, we work in a space of maximal symmetries where the
resonances are avoided; This is akin to the Palais principle of symmetric
criticality \cite{Palais}. The previous analysis yields the existence of
branches stemming from the critical magnetic field with the symmetries of
these one dimensional fixed point subspaces of irreducible representations(see
Section \ref{bifurcation} for more details). A similar approach has been used to obtain multi-pole periodic solutions to the Gross-Pitaevskii equation in the plane \cite{CG}; these solutions arise as global bifurcations of a nonlinear Schr\"{o}dinger equation.  

Our main result is the following.

\begin{theorem}
\label{Tmain}There exists a critical magnetic field $\tau_{c}$ such that the
uniformly layered state has two bifurcating solutions starting from $\tau
_{c},$ except for a finite union of hypersurfaces in the set of parameters
$(a,b,\varepsilon).$ One of these branches is stable.
\end{theorem}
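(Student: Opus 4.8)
The plan is to treat $\tau$ as the bifurcation parameter and study bifurcation from the family $(\psi_0,\mathbf{n}_0)$, which is a critical point of $F$ for every $\tau$ since the coupling term $c\varepsilon\nabla\psi_0-i\mathbf{n}_0\psi_0$ and each of the remaining energy densities vanish there. First I would compute the second variation $L_\tau=D^2F(\psi_0,\mathbf{n}_0)$ on the space of perturbations $(\delta\psi,\delta\mathbf{n})$ obeying the homogenized boundary conditions and the linearized constraint $\mathbf{n}_0\cdot\delta\mathbf{n}=0$; writing $\delta\psi=\psi_0(w_1+iw_2)$ and $\delta\mathbf{n}=(v_1,v_2,0)$ turns $L_\tau$ into a self-adjoint system in the real unknowns $(w_1,w_2,v_1,v_2)$. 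Because the domain is $\TT\times I$ with periodicity in $(x,y)$ and Dirichlet data in $z$, $L_\tau$ block-diagonalizes over the Fourier lattice $k=(ma,\ell b)$ paired with the admissible $z$-modes, and each block is a small matrix whose lowest eigenvalue I can track explicitly in $\tau$. I would then define $\tau_c$ as the smallest $\tau$ at which some block first acquires a zero eigenvalue, so that $(\psi_0,\mathbf{n}_0)$ is a strict local minimizer for $\tau<\tau_c$ and loses stability at $\tau_c$.

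Second, I would make the invariance group explicit: $F$ is equivariant under translations and reflections in $x$ and $y$, the reflection $z\mapsto-z$, and the induced action on $(w_1,w_2,v_1,v_2)$. The obstruction flagged in the introduction is that this group acts nontrivially on $\ker L_{\tau_c}$, so the eigenvalue is not simple and $\ker L_{\tau_c}$ splits into irreducible representations. The key step is to classify these irreducibles and, for suitable isotropy subgroups $H$, to exhibit one-dimensional fixed-point subspaces $\mathrm{Fix}(H)$. By the Palais principle of symmetric criticality the restriction of $\nabla F$ to the $H$-invariant configurations is still a gradient, so its zeros are genuine solutions of the full Euler--Lagrange system, and $L_{\tau_c}$ restricted to the $H$-invariant perturbations has a simple zero eigenvalue. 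Two inequivalent choices of $H$, corresponding to two distinct undulation symmetries, are what will produce the two branches.

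Third, inside each one-dimensional fixed-point subspace the hypotheses of the Crandall--Rabinowitz theorem for simple eigenvalues now hold, and I would apply it exactly as in the two-dimensional analysis of \cite{G-J2} to obtain a smooth branch of nontrivial solutions emanating from $\tau_c$. The excluded finite union of hypersurfaces in $(a,b,\varepsilon)$ is precisely the set where this reduction fails: for a codimension-one set of parameters a second Fourier block becomes critical at the same $\tau_c$, enlarging $\ker L_{\tau_c}$ inside $\mathrm{Fix}(H)$ and destroying simplicity and/or the transversality of the eigenvalue crossing. I would show that these resonance conditions are finitely many polynomial (in the mode numbers) constraints on $(a,b,\varepsilon)$ and hence cut out a finite union of hypersurfaces.

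Finally, for the stability claim I would perform a Lyapunov--Schmidt reduction onto the kernel, compute the leading cubic coefficient of the reduced bifurcation equation along each branch, and invoke the principle of exchange of stability to decide which branch is stable. The hard part will be the non-simplicity itself together with the representation-theoretic bookkeeping: one must be sure the group action on $\ker L_{\tau_c}$ is understood well enough that the one-dimensional fixed-point subspaces account for all the relevant bifurcating directions and that the non-resonance condition is genuinely generic. The secondary difficulty is the stability comparison, since it requires controlling the quadratic and cubic terms of $F$ along the two branches rather than only the linearization at $\tau_c$.
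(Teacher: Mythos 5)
Your proposal follows essentially the same route as the paper: Fourier/mode decomposition of the linearized operator to locate $\tau_{c}=\min_{m,n,l}\tau_{m,n,l}$, identification of the $O(2)\times O(2)\times\mathbb{Z}_{2}$ action and its irreducible representations, restriction to one-dimensional fixed-point spaces of suitable isotropy subgroups to recover a simple eigenvalue crossing, and exclusion of a finite union of resonance hypersurfaces in $(a,b,\varepsilon)$. The only notable difference is that the paper invokes a degree-theoretic equivariant bifurcation theorem (reducing to the implicit function theorem in the simple case) rather than Crandall--Rabinowitz directly, and your Lyapunov--Schmidt plan for the stability claim is actually more explicit than what the paper itself carries out.
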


In Section \ref{bifurcation} we give more precise information about these
branches, in particular we find their symmetries and in \ref{localestimates}
we provide their local asymptotic profile. We are also able to retrieve other
bifurcation branches not covered by Theorem \ref{Tmain}, namely for the
resonant case $a=b$ (see Proposition \ref{propab} for more details).

Next, we investigate what happens at each cross section of the domain under
the assumption that the magnetic field is large enough to render a planar
configuration. We find this reduced model bears strong resemblance with the
one for micromagnetics considered in \cite{Alouges-Serfaty} and from this we
derive important qualitative information about minimizers in the limit
$\varepsilon\to0,$ including a general compactness statement and a sharp lower
bound that allows us to compare between natural candidates for the optimal
pattern. Finally, we provide some numerical evidence in support of our findings.

We now give an outline of the paper. In the next section we introduce notation
and the functional setting where our bifurcation problem is posed. We then
proceed to study the spectral problem of the linearized operator at the
undeformed state and derive the critical field in Section \ref{spectrum}. In
Section \ref{bifurcation} we study the irreducible representations associated
to $O(2)\times O(2)\times\mathbb{Z}_{2}$ and the corresponding isotropy
groups; from these we obtain two fixed one dimensional subspaces were we find
the aforementioned bifurcation branches. We move to the $2d$ dimensional
problem in Section \ref{planar} and deduce the periodicity of minimizers. In the final section we present the numerical
simulations depicting the $2d$ planar configurations and layers at the onset
of undulations, confirming that the estimate of the critical field from our
theory is consistent with the numerical results.

\section{Setting up the problem}

\label{Setting}We start by introducing some notation which is going to be
essential for the bifurcation analysis.

\subsection{Some notions of actions on functionals}

Let $G$ be a group . We recall that a Hilbert space $\mathcal{H}$ is a
$G$-\textbf{representation}, if there is a morphism of groups given by {the
map $\rho:G\rightarrow GL(\mathcal{H})$}.{ }Also, a set $\Omega$ is said to be
\textbf{invariant} if $\rho(\gamma)x\in\Omega$ for all $\gamma\in G$, $x
\in\Omega$. {An \textbf{irreducible representation} is an invariant subspace
of }$\mathcal{H}${ without a proper invariant subspace.}

For a subgroup $H<G$, we consider the \textbf{fixed point space}%
\[
Fix(H)=\{x\in\mathcal{H}:\rho(\gamma)x=x\text{ for all }\gamma\in H\}\text{.}%
\]
The \textbf{orbit} and the \textbf{isotropy group} of a point{ $x\in
\mathcal{H}$ are defined as}%
\[
Gx=\{\rho(\gamma)x:\gamma\in G\}\text{ and }G_{x}=\{\gamma:\rho(\gamma
)x=x\}\text{ respectively.}%
\]
It is said that a map $f:\mathcal{H}\rightarrow\mathcal{H}$ is $G$%
-\textbf{equivariant} if
\[
f(\rho(\gamma)x)=\rho(\gamma)f(x)\text{.}%
\]
Finally, a map $F:\mathcal{H}\rightarrow\mathbb{R}$ is \textbf{invariant} if
$F(x)=F(\rho(\gamma)x)$.

The following properties are used in this article. We refer the reader to
\cite{IzVi03} for a proof.

\begin{lemma}
{ (\cite{IzVi03})}\label{ActionProp}

\begin{enumerate}
\item If $f:\mathcal{H}\rightarrow\mathcal{H}$ is $G$-equivariant and $x\in
Fix(H)$, then the linear map $f^{\prime}(x)$ is $H$-equivariant.

\item If $\rho(\gamma)$ is an isometry in $\mathcal{H}$ for all $\gamma\in G$,
i.e. $\rho:G\rightarrow O(\mathcal{H)}$, then the gradient of an invariant map
$F$ is equivariant, i.e.
\[
f(x)=\nabla F:\mathcal{H\rightarrow H}%
\]
is $G$-equivariant.

\item The restriction of an $G$-equivariant map $f:\mathcal{H}\rightarrow
\mathcal{H}$ to $Fix(H)$ is well defined, i.e. $f(x)\in Fix(H)$ for all $x\in
Fix(H)$.
\end{enumerate}
\end{lemma}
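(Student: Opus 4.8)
The plan is to derive all three assertions directly from the defining identities of equivariance and invariance, differentiating where needed and exploiting that each $\rho(\gamma)$ is a bounded linear operator (an element of $GL(\mathcal{H})$), so that the chain rule applies verbatim in the Hilbert space.

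For part (1), I would start from the equivariance identity $f(\rho(h)y)=\rho(h)f(y)$, valid for every $y\in\mathcal{H}$ and every $h\in H<G$. Fr\'echet-differentiating both sides in $y$ at the point $y=x$ and using the chain rule together with the linearity of $\rho(h)$ gives $f^{\prime}(\rho(h)x)\,\rho(h)=\rho(h)\,f^{\prime}(x)$. The decisive step is to invoke $x\in Fix(H)$, which collapses $\rho(h)x$ to $x$, yielding $f^{\prime}(x)\rho(h)=\rho(h)f^{\prime}(x)$ for all $h\in H$; this is precisely the statement that the bounded linear map $f^{\prime}(x)$ is $H$-equivariant.

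For part (2), I would differentiate the invariance identity $F(\rho(\gamma)x)=F(x)$ in $x$, obtaining $\langle\nabla F(\rho(\gamma)x),\rho(\gamma)v\rangle=\langle\nabla F(x),v\rangle$ for all $v\in\mathcal{H}$. Here is the one place where the hypothesis is essential: to move $\rho(\gamma)$ off the test vector I pass to the adjoint, $\langle\rho(\gamma)^{\ast}\nabla F(\rho(\gamma)x),v\rangle=\langle\nabla F(x),v\rangle$, and then use that $\rho(\gamma)$ is an isometry, so $\rho(\gamma)^{\ast}=\rho(\gamma)^{-1}$. Since $v$ is arbitrary this forces $\rho(\gamma)^{-1}\nabla F(\rho(\gamma)x)=\nabla F(x)$, i.e. $\nabla F(\rho(\gamma)x)=\rho(\gamma)\nabla F(x)$, which is the $G$-equivariance of $f=\nabla F$. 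I expect this identification of the adjoint with the inverse to be the genuine subtlety, since it is exactly why the orthogonality assumption $\rho:G\rightarrow O(\mathcal{H})$ cannot be dropped: for a general representation the gradient would only be equivariant with respect to the contragredient (inverse-adjoint) action.

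Part (3) is then immediate and requires no differentiation: for $x\in Fix(H)$ and $h\in H$, equivariance gives $\rho(h)f(x)=f(\rho(h)x)=f(x)$, the last equality because $\rho(h)x=x$. Hence $f(x)$ is fixed by every $h\in H$, that is $f(x)\in Fix(H)$, so the restriction of $f$ to $Fix(H)$ indeed maps $Fix(H)$ into itself. Throughout, the only analytic prerequisites are Fr\'echet differentiability of $f$ and $F$ together with the boundedness of the operators $\rho(\gamma)$, so no further technical machinery is needed.
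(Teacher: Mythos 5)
Your proof is correct in all three parts. Note, however, that the paper itself gives no proof of this lemma: it simply refers the reader to \cite{IzVi03}, so there is no internal argument to compare against. Your derivation is the standard one that the cited reference contains --- chain-rule differentiation of the equivariance identity $f(\rho(h)y)=\rho(h)f(y)$ at $y=x$ followed by the collapse $\rho(h)x=x$ for part (1), differentiation of the invariance identity plus passage to the adjoint for part (2), and the one-line computation $\rho(h)f(x)=f(\rho(h)x)=f(x)$ for part (3). You also correctly isolate the only genuine subtlety, namely that the orthogonality hypothesis is what lets you identify $\rho(\gamma)^{\ast}$ with $\rho(\gamma)^{-1}$ (invertibility itself being automatic from the representation property $\rho(\gamma)^{-1}=\rho(\gamma^{-1})$); without it the gradient would only intertwine the given action with the contragredient one, which is exactly why the lemma states the hypothesis $\rho:G\rightarrow O(\mathcal{H})$. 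Your proof would serve as a self-contained replacement for the external citation.
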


The previous definitions and statement are adapted {for a }manifold
$\mathcal{M}\subset\mathcal{H}$, if $\mathcal{M}$ is invariant by the linear
transformation $\rho(\gamma)$. In this case {the action is }%
\[
\rho(\gamma)u={\rho(\gamma,u)}:G\times\mathcal{M}\rightarrow\mathcal{M}{.}%
\]

\subsection{Natural actions on the Landau-de Gennes energy}

The Landau-de Gennes energy $F$ introduced in \eqref{energy} is well defined
in the manifold
\[
\mathcal{M}=\left\{  (\psi,n)\in H^{2}(T^{2}\times I;\mathbb{C}\times
S^{2}):(\psi,n)(x,y,\pm\pi/2)=(e^{\pm i\pi/2c\varepsilon},\mathbf{e}%
_{3})\right\}  \text{.}%
\]
Since $\mathcal{M}$ is contained in the Hilbert space $L^{2}(T^{2}\times
I;\mathbb{C}\times\mathbb{R}^{2})$, we define the action of the group%
\[
G:=O(2)\times O(2)\times\mathbb{Z}_{2}%
\]
in this Hilbert space as follows. \vskip.1in \textbf{1.} The rotations.
\vskip.1in For $(\varphi,\theta)\in\lbrack0,2\pi)^{2}=O(2)\times O(2),$%

\[
\rho(\varphi,\theta)(\psi,n)=(\psi,n)(x+\varphi/a,y+\theta/b,z)\text{,}%
\]
\vskip.1in \textbf{2.} The reflections. \vskip.1in The action of the planar
reflections $\kappa_{x},\kappa_{y}\in O(2)\times O(2)$ is given by%
\begin{align*}
\rho(\kappa_{x})(\psi,n)  &  =(\psi,R_{x}n)(-x,y,z)\text{,}\\
\rho(\kappa_{y})(\psi,n)  &  =(\psi,R_{y}n)(x,-y,z)\text{,}%
\end{align*}
where $R_{x}=diag(-1,1,1)$ and $R_{y}=diag(1,-1,1) .$

The above defines the action of the direct product of the two copies of the
orthogonal group. \vskip.1in \textbf{3.} The $\mathbb{Z}_{2}$ component.
\vskip.1in We define the action of the reflection $\kappa_{z}\in\mathbb{Z}%
_{2}$ by
\[
\rho(\kappa_{z})(\psi,n)=(\bar{\psi},R_{x}R_{y}n)(x,y,-z)\text{.}%
\]

We have:

\begin{lemma}
\label{invoff} The map $F:\mathcal{M}\rightarrow\mathbb{R}$ is $G$-invariant.
\end{lemma}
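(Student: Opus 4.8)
The plan is to verify invariance $F(\rho(\gamma)(\psi,n)) = F(\psi,n)$ directly by inspecting each of the three generating types of group elements and checking that every term in the energy functional $F$ from \eqref{energy} is preserved. Since $G$ is generated by the rotations $\rho(\varphi,\theta)$, the planar reflections $\kappa_x,\kappa_y$, and the $\mathbb{Z}_2$ reflection $\kappa_z$, it suffices to check invariance under each generator separately; invariance under the full group then follows because a composition of symmetries of $F$ is again a symmetry. I would organize the proof term by term, noting that the integrand of $F$ has four pieces: the covariant kinetic term $\varepsilon^{-1}|c\varepsilon\nabla\psi - in\psi|^2$, the Dirichlet energy $\varepsilon|\nabla n|^2$, the potential $\tfrac{g}{2\varepsilon}(1-|\psi|^2)^2$, and the magnetic term $-\tau(n_1^2+n_2^2)$.

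First I would treat the translations (rotations) $\rho(\varphi,\theta)$, which act purely by shifting the $x$ and $y$ variables. Because $\Omega = T^2\times I$ and we integrate over the torus, a translation in the periodic directions is a measure-preserving change of variables that leaves $\nabla\psi$, $\nabla n$, $|\psi|$ and the components $n_1,n_2$ pointwise unchanged (up to the argument shift), so each of the four terms is manifestly invariant by a change of variables in the integral. For the planar reflections, say $\kappa_x$, the map sends $(\psi,n)(x,y,z)$ to $(\psi, R_x n)(-x,y,z)$. The change of variables $x\mapsto -x$ is again measure-preserving on the torus, and I would check that the rotation matrix $R_x=\mathrm{diag}(-1,1,1)$ acting on $n$ is consistent with the sign flip in the $\partial_x$ derivative: since $R_x$ is an orthogonal matrix, $|R_x n|=|n|$ and $|\nabla(R_x n)|=|\nabla n|$, while the covariant term requires checking that $R_x n$ paired against the reflected $\nabla\psi$ reproduces the original modulus—here the key point is that the combined action on the spatial derivative and on the vector $n$ preserves the Hermitian form $|c\varepsilon\nabla\psi - in\psi|^2$. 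The magnetic term is preserved because $R_x$ only changes the sign of $n_1$, leaving $n_1^2+n_2^2$ fixed; $\kappa_y$ is identical with roles of $x,y$ swapped.

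The one genuinely delicate generator is $\kappa_z$, where $\rho(\kappa_z)(\psi,n)=(\bar\psi, R_xR_y n)(x,y,-z)$, combining complex conjugation of $\psi$ with the reflection $z\mapsto -z$ and the matrix $R_xR_y=\mathrm{diag}(-1,-1,1)$ on $n$. The potential $\tfrac{g}{2\varepsilon}(1-|\psi|^2)^2$ is trivially invariant since $|\bar\psi|=|\psi|$, and the Dirichlet and magnetic terms behave as before because $R_xR_y$ is orthogonal and leaves $n_1^2+n_2^2=n_1^2+n_2^2$ unchanged. The main obstacle, and the step I expect to require the most care, is the covariant kinetic term under $\kappa_z$: I must verify that conjugating $\psi$, reflecting in $z$, and negating the first two components of $n$ together leave $|c\varepsilon\nabla\psi - in\psi|^2$ invariant. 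The mechanism is that complex conjugation sends $-in\psi$ to $\overline{-in\bar\psi}\,$-type expressions and flips the sign of the imaginary unit, which must be compensated by the simultaneous sign flips of $n_1,n_2$ and the reversal of $\partial_z$; writing out $c\varepsilon\nabla\psi - in\psi$ component by component and tracking how each transformation acts on the three spatial components, one sees the modulus is preserved. I would also confirm that $\mathcal{M}$ is invariant under all generators, i.e.\ that the boundary conditions at $z=\pm\pi/2$ are respected—in particular that conjugation and the $z$-reflection in $\kappa_z$ map the boundary datum $e^{\pm i\pi/2c\varepsilon}$ to itself, so that $\rho(\gamma)$ indeed maps $\mathcal{M}$ to $\mathcal{M}$ and the invariance statement is well posed.
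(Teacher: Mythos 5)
Your proposal is correct and follows essentially the same route as the paper: invariance under the in-plane translations and reflections is dispatched as a measure-preserving change of variables, and the only delicate point is the covariant term $\left\vert c\,\varepsilon\nabla\psi-in\psi\right\vert^{2}$ under $\kappa_{z}$, which the paper resolves exactly by the mechanism you describe, namely $\nabla\tilde{\psi}=R_{z}\nabla\bar{\psi}(x,y,-z)$ with $R_{z}=-R_{x}R_{y}$, so that conjugation's sign flip of $i$ is compensated by the sign flips in $n_{1},n_{2}$ and in $\partial_{z}$. Your added check that the boundary data are preserved (so that $\rho(\gamma)$ maps $\mathcal{M}$ to itself) is a point the paper leaves implicit but is consistent with its argument.
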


\begin{proof}
Since $F$ is invariant by translations and reflections in $x$ and $y$, due to
the boundary conditions, then it is invariant by the action of $O(2)\times
O(2)$. Moreover, it is clear that all the terms in $F$ are invariant by the
action of $\kappa_{z}$ except maybe for $\left\vert c\,\varepsilon\nabla
\psi-in\psi\right\vert ^{2}$. To see that this term is invariant too, let us
denote $(\tilde{\psi},\tilde{n})=\rho(\kappa_{z})(\psi,n)$, then $\nabla
\tilde{\psi}=R_{z}\nabla\bar{\psi}(x,y,-z)$ and using that $R_{z}=-R_{x}R_{y}$
we have%
\begin{align*}
\left\vert c\,\varepsilon\nabla\tilde{\psi}-i\tilde{n}\tilde{\psi}\right\vert
^{2}  &  =\left\vert c\,\varepsilon R_{z}\nabla\bar{\psi}(x,y,-z)-iR_{x}%
R_{y}n\bar{\psi}\right\vert ^{2}\\
&  =\left\vert c\,\varepsilon R_{z}\nabla\psi(x,y,-z)-iR_{z}n\psi\right\vert
^{2}=\left\vert c\,\varepsilon\nabla\psi-in\psi\right\vert ^{2}\text{.}%
\end{align*}
Therefore, we conclude $F$ is invariant under the action of the group $G$.
\end{proof}

\subsection{The linearization at the uniformly layered state}

The uniformly layered state
\[
x_{0}=(\psi_{0},\mathbf{n}_{0})\equiv(e^{iz/c\varepsilon},\mathbf{e}_{3}).
\]
is a trivial critical point of $F$ in $\mathcal{M}$. Since all the elements of
$G$ leave this critical point invariant, their orbits are trivial and their
isotropy groups are all of $G$, i.e. $Gx_{0}=\{x_{0}\}$ and $G_{x_{0}}=G$ for
all $x_{o}$ in $\mathcal{M}$.{ }

We may {parameterize a submanifold of $\mathcal{M}$ by writing%
\begin{equation}
\psi=\psi_{0}(1+iv/c\varepsilon)\text{ and }n(x,y,z)=(w,1)/\left\vert
(w,1)\right\vert , \label{cif}%
\end{equation}
where $v\in H_{0}^{2}(T^{2}\times I;\mathbb{R})$ and$\ w\in H_{0}^{2}%
(T^{2}\times I;\mathbb{R}^{2})$}, where hereafter the subindex $0$ means a
{zero boundary conditions on $v(x,y,\pm\pi/2)=\mathbf{0}$ and $w(x,y,\pm
\pi/2)=0$.}

The map $(w,v)\rightarrow(\psi,n)$ defines { a diffeomorphism} from the space
\[
\mathcal{H}=H_{0}^{2}(T^{2}\times I;\mathbb{R}^{3})
\]
into the submanifold
\[
\mathcal{N}=\{(\psi,n)\in\mathcal{M}:\psi-\psi_{0}\in i\mathbb{R}\text{,
}n(x,y,z)\in S_{+}^{2}\}\text{.}%
\]
This is the set of functions in $\mathcal{M}$ such that {$n$ takes values} in
the upper half {of the unit} sphere and $\psi-\psi_{0}$ is purely imaginary.

Let us define $u=(w,v)\in\mathcal{H}$, writing the energy functional $F$ using
this parametrization of $\mathcal{N},$ we are led to the map $\tilde
{F}:\mathcal{H}\rightarrow\mathbb{R}$ defined by%
\[
\tilde{F}(u):=F(\psi(u),n(u))\text{,}%
\]
thus {$u=0\ $corresponds to the} uniformly layered state $(\psi_{0},n_{0})$,
see (\ref{cif}).

The action of $G$ in these coordinates $u=(w,v)$ is given by%
\[
\rho(\varphi,\theta)u=u(x+\varphi/a,y+\theta/b,z)
\]
and
\[
\rho(\kappa_{x})u=R_{x}u(-x,y,z)\text{, }\rho(\kappa_{y})u=R_{y}%
u(x,-y,z)\text{ and }\rho(\kappa_{z})u=-u(x,y,-z).
\]

The first variation $\delta\tilde{F} (u)$ is characterized by the relation%

\[
\delta\tilde{F}(u)w=\left\langle \nabla\tilde{F}(u),w\right\rangle _{L^{2}}%
\]
{ We define the map} $f:\mathcal{H}\rightarrow L^{2}$ by
\[
f(u)=\nabla\tilde{F}(u)\text{,}%
\]
which is $G$-equivariant, because the action of $G$ is isometric, by property
$2$ of Lemma \ref{ActionProp}.

In order to obtain the second variation of the energy (\ref{energy}) around
the flat layer state $(\psi_{0},n_{0})$ we follow the definition and
derivation of the critical magnetic field in \cite{L-P}. In these coordinates,
consider a perturbation $(\psi_{t},n_{t})=(\psi(tu),n(tu))$, i.e. we have%
\[
\psi_{t}=\psi_{0}+i\frac{\psi_{0}}{c\varepsilon}tv\text{, }\qquad n_{t}%
=\frac{e_{3}+(tw,0)}{|e_{3}+(tw,0)|}=n_{0}+tn_{1}+t^{2}n_{2}+\mathcal{O}%
(t^{3}),
\]
where $n_{0}=e_{3}$, $n_{1}=(w,0)$ and $n_{2}=-\frac{1}{2}\left\vert
w\right\vert ^{2}e_{3}$, since $e_{3}$ and $w$ are orthogonal. Then, the
second variation around the flat layer state $u=0$ is%
\begin{equation}
\delta^{2}\tilde{F}(0)u=\frac{1}{2}\int_{\Omega}\frac{1}{\varepsilon
}\left\vert \nabla v-(w,0)\right\vert ^{2}+\varepsilon\left\vert \nabla
w\right\vert ^{2}-\tau\left\vert w\right\vert ^{2}\text{.}
\label{LinearFunctional}%
\end{equation}
Now, since we have that $\delta\tilde{F}(u)w=\left\langle \nabla\tilde
{F}(u),w\right\rangle $, then
\[
\delta^{2}\tilde{F}(0)u=\left\langle f^{\prime}(0)u,u\right\rangle \text{.}%
\]

From the associated Euler-Lagrange equations, we have that the second
variation at the flat layered state applied to $u=(w,v)$ is%
\[
\delta^{2}\tilde{F}(0)u=\frac{1}{2}\int_{\Omega}\frac{1}{\varepsilon
}\left\vert \nabla v-(w,0)\right\vert ^{2}+\varepsilon\left\vert \nabla
w\right\vert ^{2}-\tau\left\vert w\right\vert ^{2}=\left\langle
Lu,u\right\rangle _{L^{2}}\text{,}%
\]
then the operator $L=f^{\prime}(0):\mathcal{H}\rightarrow L^{2}.$ is given by
\begin{equation}
L(w,v)=(-\varepsilon\Delta w-\frac{1}{\varepsilon}\nabla_{\parallel}v+\frac
{1}{\varepsilon}w-\tau w,-\frac{1}{\varepsilon}\Delta v+\frac{1}{\varepsilon
}\nabla_{\parallel}\cdot w)\text{,}%
\end{equation}
where $\nabla_{\parallel}=(\partial_{x},\partial_{y})$.

Of course, the linearized operator $L$ depends on the parameter $\tau$ and in the next subsection we make this dependence explicit and write $L=L(\tau)$ when we present the bifurcation theorem needed for our main result.  

\subsection{An abstract bifurcation theorem}

 The solutions described in Theorem \ref{Tmain} correspond to critical points of $\tilde{F}:\mathcal{H}\rightarrow
\mathbb{R}.$ We obtain these solutions as zeros of the map%
\[
f(u)=f(u;\tau)=L(\tau)u+O(\left\vert u\right\vert_{\mathcal{H}} ^{2}):\mathcal{H}\rightarrow
L^{2}\text{.}%
\]
Moreover, since $f(u)$ is $G$-equivariant, then the map $f^{H}:Fix(H)\cap
\mathcal{H\times}\mathbb{R}\rightarrow Fix(H)$ is well defined by property 3
of Lemma \ref{ActionProp}, where%
\[
Fix(H)=\{u\in L^{2}:\rho(\gamma)u=u\text{ for }\gamma\in H\}\text{.}%
\]

Therefore, we may consider 
\[
f^{H}(u;\tau)=L^{H}(\tau)u+O(\left\vert u\right\vert_H^{2}),
\]
where $L^{H}$ is the restriction of $L$ to $Fix(H)$ and $\vert\cdot\vert_H$ is the restriction of $\vert \cdot\vert_{\mathcal{H}}$ to $Fix(H).$ To obtain our bifurcation branches we apply Theorem \ref{T5} below to $f^H$ for suitable choices of $H.$

\begin{definition}
Assume that for some linear operator $\mathcal{L}$ and some $H<G,$  $\mathcal{L}^{H}(\tau)$ is a self-adjoint Fredholm operator, $\ker
\mathcal{L}^{H}(\tau_{0})$ is non trivial and $\mathcal{L}^{H}(\tau)$ is invertible for $\tau$
close but different from $\tau_{0}.$ Then we can define $n^{H}(\tau)$ as the
Morse index of $\mathcal{L}(\tau)$ restricted to $\ker \mathcal{L}^{H}(\tau_{0})$ for $\tau$ close
to $\tau_{0}$ and $\eta^{H}(\tau_{0})$ as the net crossing number of
eigenvalues  of $\mathcal{L}^{H}$ at $\tau_{0}$, that is
\[
\eta^{H}(\tau_{0}):=\lim_{\varepsilon\rightarrow0}\left(  n^{H}(\tau
_{0}-\varepsilon)-n^{H}(\tau_{0}+\varepsilon)\right)  \text{.}%
\]

\end{definition}

\begin{theorem}\label{T5}Let $g:\mathcal{H}\to L^2$ be a family of  $G$-equivariant maps (indexed by $\tau$) satisfying
\[g(u;\tau)=\mathcal{L}(\tau)u+\mathcal{O}(\vert u\vert^2_{\mathcal{H}})\] 
\label{BifTheo}If $\mathcal{L}^{H}(\tau)$ is a self-adjoint Fredholm operator and
$\eta^{H}(\tau_{0})$ is odd, then $g^{H}$ has a local bifurcation of zeros
from $(0,\tau_{0})$ in $Fix(H)\cap\mathcal{H\times}\mathbb{R}$.
\end{theorem}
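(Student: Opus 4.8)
The plan is to prove Theorem \ref{T5} as a standard odd-crossing-number bifurcation result, carried out entirely inside the symmetric subspace $Fix(H)$ and driven by a change of topological (Leray--Schauder) degree. The role of each hypothesis is clear in advance: restricting to $Fix(H)$ is legitimate by property 3 of Lemma \ref{ActionProp}, and it strips away the spurious kernel directions coming from the group orbit, so that $\ker\mathcal{L}^{H}(\tau_{0})$ is the genuinely nontrivial kernel responsible for the bifurcation; the self-adjoint Fredholm assumption guarantees that $Fix(H)$ splits as $\ker\mathcal{L}^{H}(\tau_{0})$ plus a closed complement and that only finitely many eigenvalues sit near $0$; and the oddness of $\eta^{H}(\tau_{0})$ is precisely what forces the local index of the trivial solution to change sign across $\tau_{0}$.

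First I would recast $g^{H}(u;\tau)=0$ in a form to which Leray--Schauder degree applies. Viewing $\mathcal{L}^{H}(\tau)$ as the realization on $Fix(H)$ of an elliptic operator with domain $Fix(H)\cap\mathcal{H}$, its resolvent is compact (elliptic regularity together with the compact embedding $H^{2}\hookrightarrow L^{2}$). Fixing $\mu$ outside the spectrum, the equation is equivalent to the fixed-point problem
\[
u=\mathcal{K}(u;\tau):=(\mathcal{L}^{H}(\tau)-\mu)^{-1}\bigl(-\mu u-N(u;\tau)\bigr),\qquad N(u;\tau):=g^{H}(u;\tau)-\mathcal{L}^{H}(\tau)u=\mathcal{O}(|u|_{H}^{2}),
\]
where $\mathcal{K}(\cdot;\tau)$ is compact and $\mathcal{K}'(0;\tau)=-\mu(\mathcal{L}^{H}(\tau)-\mu)^{-1}$, so that $u\mapsto u-\mathcal{K}(u;\tau)$ is a compact perturbation of the identity with $u=0$ as an isolated zero whenever $\mathcal{L}^{H}(\tau)$ is invertible, i.e.\ for $\tau$ near but different from $\tau_{0}$. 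The Leray--Schauder index formula then gives $\operatorname{ind}(0;\tau)=\deg(I-\mathcal{K}'(0;\tau),B_{r},0)=(-1)^{m(\tau)}$, where $m(\tau)$ is the total multiplicity of the eigenvalues of $\mathcal{K}'(0;\tau)$ exceeding $1$; these correspond exactly to the negative eigenvalues of $\mathcal{L}^{H}(\tau)$, so the jump of $m(\tau)$ across $\tau_{0}$ equals the net number of eigenvalues of $\mathcal{L}^{H}$ passing through $0$, namely $\eta^{H}(\tau_{0})$. As $\eta^{H}(\tau_{0})$ is odd, $\operatorname{ind}(0;\tau_{0}-\varepsilon)=-\operatorname{ind}(0;\tau_{0}+\varepsilon)$ for small $\varepsilon>0$.

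To finish I would argue by contradiction in the usual way. If $(0,\tau_{0})$ were not a bifurcation point of $g^{H}$, there would exist $r,\delta>0$ so that $u=0$ is the only zero of $g^{H}(\cdot;\tau)$ in $\overline{B_{r}}$ for all $|\tau-\tau_{0}|\le\delta$. Homotopy invariance of the degree in $\tau$ would then force $\deg(I-\mathcal{K}(\cdot;\tau),B_{r},0)$ to be constant on $[\tau_{0}-\delta,\tau_{0}+\delta]$, and since $0$ is the only zero this common value equals $\operatorname{ind}(0;\tau)$, contradicting the sign change just established. Hence $g^{H}$ has a local bifurcation of zeros from $(0,\tau_{0})$ in $Fix(H)\cap\mathcal{H}\times\mathbb{R}$, as claimed.

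The step I expect to be the main obstacle is the index bookkeeping: making rigorous the passage from the abstractly defined $\eta^{H}(\tau_{0})$ — given via Morse indices of $\mathcal{L}^{H}(\tau)$ restricted to $\ker\mathcal{L}^{H}(\tau_{0})$ — to the actual change in the Leray--Schauder index. This requires confining attention to the finitely many eigenvalue branches that reach $0$ at $\tau_{0}$, checking that the remaining spectrum stays bounded away from $0$ and contributes nothing to the change of index, and it is exactly here that the oddness of $\eta^{H}$ is indispensable, since an even crossing number would leave the degree unchanged and the argument would break down. A secondary, more routine point is recording the compactness of the resolvent and the $\mathcal{O}(|u|_{H}^{2})$ estimate for $N$ in the $Fix(H)$ topology, both of which follow from elliptic regularity.
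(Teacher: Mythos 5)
Your proposal is correct and takes essentially the same route as the paper, whose proof simply invokes the degree-theoretic bifurcation theorem (Theorem 3.5.1 in \cite{Ni2001}) that your argument reconstructs in detail: the odd crossing number $\eta^{H}(\tau_{0})$ forces a sign change of the Leray--Schauder index of the trivial solution across $\tau_{0}$, and homotopy invariance then yields bifurcation. The only elements of the paper's proof you do not reproduce are the remarks that the degree argument actually produces a local \emph{continuum} of zeros and that in the simple case $\dim\ker\mathcal{L}^{H}(\tau_{0})=1$, $\eta^{H}(\tau_{0})=\pm1$ one can instead use the implicit function theorem to obtain a parametrized branch with local estimates; neither is required for the statement as written.
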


\begin{proof}
We may adapt the Theorem 3.5.1 in \cite{Ni2001} that uses degree theory to
prove that the bifurcation branch form a local continuum. In the case
$\dim\ker L^{H}(\tau_{0})=1$ and $\eta^{H}(\tau_{0})=\pm1$, the theorem can be
obtained with an aplication of the implicit function theorem. This is the
condition of having a simple bifurcation which allows us to give estimates in the
local branches parametrized by the amplitude, using the bifurcation equation as in \cite{CrandallRab}.
\end{proof}
\begin{remark}
Actually, applying a result \cite{HKiel} based on Conley index, we can
prove that if $\eta^{H}(\tau_{0})$ is different from zero, there is local
bifurcation from $(0,\tau_{0})$. However, this local branch does not need to be
a continuum.
\end{remark}

\section{The spectrum: identifying the critical field}

\label{spectrum}

For the bifurcation analysis we need to characterize the spectrum of the
linear map
\[
Lu=f^{\prime}(0)u.
\]

As a starting point we already know that the space $u\in\mathcal{H}$ has a
spectral representation in Fourier series given by
\[
u=\sum_{m,n\in\mathbb{Z}}u_{m,n}(z)e^{i(amx+bny)}\text{,}%
\]
where $u_{m,n}(z)\in H_{0}^{2}(I;\mathbb{C}^{3})$ with $u_{-m,-n}=\bar
{u}_{m,n}$. Next we proceed to further decompose the $z$-dependent Fourier
coefficients $u_{m,n}.$

\begin{proposition}
\label{prop:linear} The space $u_{m,n}\in H_{0}^{2}(I;\mathbb{C}^{3}%
\mathbb{)}$ has a representation as
\[
u_{m,n}(z)=\sum_{l\in\mathbb{Z}^{+}}u_{m,n,l}e_{m,n,l}(z),
\]
where $u_{m,n,l}\in\mathbb{C}^{3}$ and $e_{m,n,l}(z)$ is the eigenfunction of
$L$ with eigenvalue $\lambda_{m,n,l}$. Moreover, the eigenfunction
$\lambda_{m,n,l}(\tau)$ crosses zero only at the critical field%
\[
\tau_{m,n,l}=\varepsilon p^{2}+\frac{1}{\varepsilon}(l/p)^{2}\text{,}%
\]
where
\[
p^{2}=(am)^{2}+(bn)^{2}+l^{2}.
\]
In addition, the eigenfunctions satisfy the relations
\[
e_{m,n,l}(z)=R_{x}e_{-m,n,l}(z)=R_{y}e_{m,-n,l}(z)=(-1)^{l+1}e_{m,n,l}%
(-z)\text{.}%
\]

\end{proposition}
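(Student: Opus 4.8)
The plan is to diagonalize $L$ by successively exploiting translation invariance in $(x,y)$ and the Dirichlet structure in $z$, reducing the spectral problem to a family of $3\times 3$ Hermitian matrices from which the critical field and the symmetries can be read off. First I would substitute the Fourier ansatz $u=\sum_{m,n}u_{m,n}(z)e^{i(amx+bny)}$ into $L$. Since $L$ has constant coefficients in $x$ and $y$, the modes decouple: on mode $(m,n)$ one replaces $\partial_x\mapsto iam$, $\partial_y\mapsto ibn$, and $\Delta\mapsto\partial_z^2-\alpha^2$ with $\alpha^2:=(am)^2+(bn)^2$, giving an ODE operator on $H_0^2(I;\mathbb{C}^3)$. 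I would then expand each scalar component in the orthonormal Dirichlet eigenbasis $\{\phi_l\}_{l\in\mathbb{Z}^+}$ of $-\partial_z^2$ on $I$, where $-\phi_l''=l^2\phi_l$, $\phi_l(\pm\pi/2)=0$, and $\phi_l(-z)=(-1)^{l+1}\phi_l(z)$ (explicitly $\phi_l=\cos(lz)$ for odd $l$ and $\phi_l=\sin(lz)$ for even $l$). Because every coupling between the three components is a multiplication operator constant in $z$, each $\phi_l$-eigenspace is preserved and the operator block-diagonalizes into the $3\times 3$ Hermitian matrices
\[
M_{m,n,l}(\tau)=\begin{pmatrix} A & 0 & -\tfrac{iam}{\varepsilon} \\ 0 & A & -\tfrac{ibn}{\varepsilon} \\ \tfrac{iam}{\varepsilon} & \tfrac{ibn}{\varepsilon} & \tfrac{p^2}{\varepsilon}\end{pmatrix},\qquad A:=\varepsilon p^2+\tfrac1\varepsilon-\tau,\quad p^2=\alpha^2+l^2,
\]
acting on the coefficient $u_{m,n,l}\in\mathbb{C}^3$. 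Completeness of $\{\phi_l\,e^{i(amx+bny)}\}$ together with the compact resolvent of $L$ then yields the claimed spectral representation with discrete real spectrum.

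To extract the critical field I would observe that $(bn,-am,0)$ is always an eigenvector with eigenvalue $A$, while the orthogonal plane $\mathrm{span}\{(am,bn,0),(0,0,1)\}$ carries a $2\times 2$ Hermitian block of determinant $AB-\alpha^2/\varepsilon^2$, with $B:=p^2/\varepsilon$; expanding gives $\det M_{m,n,l}=A\bigl(AB-\alpha^2/\varepsilon^2\bigr)$. Setting the block determinant to zero and using $p^2-\alpha^2=l^2$ reduces to $A=\alpha^2/(\varepsilon p^2)$, i.e. $\tau=\varepsilon p^2+\tfrac1\varepsilon(l/p)^2=\tau_{m,n,l}$, the asserted formula. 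I would define $\lambda_{m,n,l}(\tau)$ to be the lower eigenvalue $\lambda_-$ of this $2\times 2$ block.

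The step requiring the most care is showing $\lambda_{m,n,l}$ crosses zero \emph{only} at $\tau_{m,n,l}$. Writing $\lambda_-=\tfrac{A+B}{2}-S$ with $S=\sqrt{(\tfrac{A-B}{2})^2+\alpha^2/\varepsilon^2}$ and differentiating in $\tau$ (using $A'(\tau)=-1$, $B'=0$), I would obtain $\frac{d\lambda_-}{d\tau}=-\tfrac12+\tfrac{A-B}{4S}\in[-1,0]$, strictly negative whenever $\alpha\neq0$; hence $\lambda_-$ is strictly decreasing and has a unique zero. Since the block trace $A+B$ is positive at $\tau_{m,n,l}$ while its determinant vanishes there, it is indeed $\lambda_-$ (not $\lambda_+$) that vanishes. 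I would also note that $A=\alpha^2/(\varepsilon p^2)>0$ at $\tau_{m,n,l}$, so the transverse eigenvalue $A$ does not vanish there; its own zero occurs at the strictly larger field $\varepsilon p^2+1/\varepsilon$. The main obstacle is precisely disentangling the relevant crossing of $\lambda_-$ from the transverse crossing and establishing strict monotonicity; the degenerate case $\alpha=0$ makes $M$ diagonal and is immediate.

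Finally, the symmetry relations follow from the $G$-equivariance of $L$. A direct computation gives $R_xM_{m,n,l}R_x=M_{-m,n,l}$, since conjugation by $R_x=\mathrm{diag}(-1,1,1)$ flips the sign of the $am$-coupling, equivalently $m\mapsto-m$; thus eigenvectors satisfy $\xi_{m,n,l}=R_x\xi_{-m,n,l}$ and hence $e_{m,n,l}=R_xe_{-m,n,l}$, with the analogous identity for $R_y$ and $n\mapsto-n$. The parity relation $e_{m,n,l}(z)=(-1)^{l+1}e_{m,n,l}(-z)$ is inherited directly from $\phi_l(-z)=(-1)^{l+1}\phi_l(z)$, equivalently expressing that the eigenfunction lies in the $(-1)^l$-eigenspace of $\rho(\kappa_z)$.
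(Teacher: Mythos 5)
Your proposal is correct and reaches exactly the same dispersion relation and the same crossing derivative as the paper, but by a genuinely different route at the key finite-dimensional step. Both arguments first decouple the $(x,y)$ Fourier modes and the Dirichlet $z$-modes; the divergence is in how the resulting $3\times 3$ problem is handled. The paper changes variables to $f=\alpha w_{1}+\beta w_{2}$, $g=\alpha w_{1}-\beta w_{2}$, $h=iv/\varepsilon$, inverts the $\lambda$-dependent operator $Lh=-\varepsilon h''+[\varepsilon(\alpha^{2}+\beta^{2})-\varepsilon^{2}\lambda]h$ to get a compact $T$, and applies the spectral mapping identity $\sigma(p(T))=p(\sigma(T))$ to $f=p(T)f$ to derive $(p^{2}-\lambda\varepsilon)(\varepsilon^{2}p^{2}-\lambda\varepsilon-\tau\varepsilon)+l^{2}-\lambda\varepsilon=0$; the ``crosses zero'' claim is then obtained by implicit differentiation, giving $\lambda'(\tau_{0})=p^{2}/((l/p)^{2}-p^{2}-1)\neq 0$. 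You instead diagonalize the Hermitian block directly: your $2\times 2$ characteristic equation $(A-\lambda)(B-\lambda)=\alpha^{2}/\varepsilon^{2}$ is, after multiplying by $\varepsilon^{2}$ and using $p^{2}-\alpha^{2}=l^{2}$, exactly the paper's relation, and your $d\lambda_{-}/d\tau=-B/(A+B)$ at the crossing equals the paper's $\lambda'(\tau_{0})$. Your version buys two things the paper does not make explicit: global strict monotonicity of $\lambda_{-}$ in $\tau$ (so uniqueness of the zero is immediate, whereas the paper only checks that the derivative is nonzero at $\tau_{0}$), and an explicit accounting of the transverse eigenvalue $A$ with eigenvector $(bn,-am,0)$, which the paper hides inside the slaved relation $g=\frac{\alpha^{2}-\beta^{2}}{\alpha^{2}+\beta^{2}}f$ and never discusses as a separate crossing at $\varepsilon p^{2}+1/\varepsilon$. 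What the paper's route buys is the explicit normalized eigenfunction (\ref{u_v}), which pins down phases so that $e_{-m,-n,l}=\bar{e}_{m,n,l}$ and the three stated symmetry relations hold simultaneously and literally; your equivariance argument $R_{x}M_{m,n,l}R_{x}=M_{-m,n,l}$ gives each relation only up to a scalar in the eigenvector normalization, so you should add one line fixing a consistent normalization (e.g.\ the paper's explicit formula) to turn ``equal up to a phase'' into the asserted equalities.
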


\begin{proof}
From the associated Euler-Lagrange equations, we have that the second
variation at the flat layered state applied to $u=(w,v)$ is%
\[
\delta^{2}\tilde{F}(0)u=\frac{1}{2}\int_{\Omega}\frac{1}{\varepsilon
}\left\vert \nabla v-(w,0)\right\vert ^{2}+\varepsilon\left\vert \nabla
w\right\vert ^{2}-\tau\left\vert w\right\vert ^{2}=\left\langle
Lu,u\right\rangle _{L^{2}}\text{,}%
\]
then the operator $L$ is given by
\begin{equation}
L(w,v)=(-\varepsilon\Delta w-\frac{1}{\varepsilon}\nabla_{\parallel}v+\frac
{1}{\varepsilon}w-\tau w,-\frac{1}{\varepsilon}\Delta v+\frac{1}{\varepsilon
}\nabla_{\parallel}\cdot w)\text{,}%
\end{equation}
where $\nabla_{\parallel}=(\partial_{x},\partial_{y})$. Then the eigenfunctions
$e_{m,n,l}(z)$ and eigenvalues $\lambda_{m,n,l}$ are such that
\begin{equation}
L\left(  e_{m,n,l}(z)e^{i(amx+bny)}\right)  =\lambda_{m,n,l}e_{m,n,l}%
(z)e^{i(amx+bny)}\text{.} \label{ei}%
\end{equation}

From here after we omit the index $m,n,l$ to simplify notation. We set
$e_{m,n,l}(z)=(w_{1},w_{2},v)$,%
\[
\alpha=am\text{ and }\beta=bn.
\]

We have that the condition (\ref{ei}) is equivalent to
\begin{align}
-\varepsilon w_{1}^{^{\prime\prime}}+\varepsilon(\alpha^{2}+\beta^{2}%
)w_{1}-i\alpha v/\varepsilon+(\varepsilon^{-1}-\tau)w_{1}  &  =\lambda
w_{1},\nonumber\label{euler}\\
-\varepsilon w_{2}^{^{\prime\prime}}+\varepsilon(\alpha^{2}+\beta^{2}%
)w_{2}-i\beta v/\varepsilon+(\varepsilon^{-1}-\tau)w_{2}  &  =\lambda w_{2},\\
-v^{^{\prime\prime}}/\varepsilon+(\alpha^{2}+\beta^{2})v/\varepsilon+i(\alpha
w_{1}+\beta w_{2})/\varepsilon &  =\lambda v.\nonumber
\end{align}
Writing
\begin{equation}
f=\alpha w_{1}+\beta w_{2},\quad g=\alpha w_{1}-\beta w_{2},\quad
\text{and}\quad h=iv/\varepsilon, \label{define-fg}%
\end{equation}
the system (\ref{euler}) becomes
\begin{align}
-\varepsilon f^{^{\prime\prime}}+[\varepsilon(\alpha^{2}+\beta^{2}%
)+(\varepsilon^{-1}-\tau)]f-(\alpha^{2}+\beta^{2})h  &  =\lambda
f,\label{equation-f}\\
-\varepsilon g^{^{\prime\prime}}+[\varepsilon(\alpha^{2}+\beta^{2}%
)+(\varepsilon^{-1}-\tau)]g-(\alpha^{2}-\beta^{2})h  &  =\lambda
g,\label{equation-g}\\
-\varepsilon h^{^{\prime\prime}}+\varepsilon(\alpha^{2}+\beta^{2})h-f  &
=\lambda\varepsilon^{2}h. \label{equation-h}%
\end{align}
with boundary conditions $(f,g,h)(\pm\pi/2)=0$.

We consider the linear operator
\[
Lh=-\varepsilon h^{^{\prime\prime}}+[\varepsilon(\alpha^{2}+\beta
^{2})-\varepsilon^{2}\lambda]h,
\]
defined in $\mathcal{D}(L)=\{h\in H^{2}(I;\mathbb{R})|h(\pm\pi/2)=0\}$. Let
the inverse be $T=L^{-1}$, it is well known that $T$ is compact. Then, the
system (\ref{equation-f}) and (\ref{equation-h}) can be written as $h=Tf$ and
\begin{equation}
f=\left(  \lambda(1-\varepsilon^{2})+\tau-\varepsilon^{-1}\right)
Tf+(\alpha^{2}+\beta^{2})T^{2}f=p(T)f, \label{T-theta}%
\end{equation}
where we have defined the polynomial
\[
p(z)=\left(  \lambda(1-\varepsilon^{2})+\tau-\varepsilon^{-1}\right)
z+(\alpha^{2}+\beta^{2})z^{2}.
\]

Let $\kappa$ be an eigenvalue of $T$, i.e., $Tf=\kappa f$, then $f$ satisfies
the equation $Lf=\kappa^{-1}f$. From the Dirichlet boundary conditions we see
that the solutions are
\[
f_{l}(x)=\left\{
\begin{array}
[c]{c}%
\sin(lz)\text{ for }l\in2\mathbb{Z}^{+}\\
\cos(lz)\text{ for }l\in2\mathbb{Z}^{+}\mathbb{-}1
\end{array}
\right.  \text{.}%
\]
Therefore, the eigenvalues $\kappa$ need to satisfy
\begin{equation}
\kappa^{-1}=\varepsilon(\alpha^{2}+\beta^{2}+l^{2})-\varepsilon^{2}\lambda.
\label{R1}%
\end{equation}

Since $T$ is compact, from (\ref{T-theta}) and the fact that $\sigma
(p(T))=p(\sigma(T))$ where $\sigma(T)$ is a spectrum of $T$ and $p$ is a
polynomial, we have $1\in p(\sigma(T)).$ This can be written as
\begin{equation}
\kappa^{-2}=\left(  \lambda(1-\varepsilon^{2})+\tau-\varepsilon^{-1}\right)
\kappa^{-1}+(\alpha^{2}+\beta^{2}). \label{R2}%
\end{equation}

Therefore eigenvalues are defined by the relations (\ref{R1}) and (\ref{R2}),
using
\[
p^{2}=\alpha^{2}+\beta^{2}+l^{2}%
\]
then%
\[
\left[  \varepsilon p^{2}-\varepsilon^{2}\lambda\right]  ^{2}=\left(
\lambda(1-\varepsilon^{2})+\tau-\varepsilon^{-1}\right)  \left[  \varepsilon
p^{2}-\varepsilon^{2}\lambda\right]  +(\alpha^{2}+\beta^{2})
\]

The previous relation is equivalent to%
\[
\left(  p^{2}-\lambda\varepsilon\right)  (\varepsilon^{2}p^{2}-\lambda
\varepsilon-\tau\varepsilon)+l^{2}-\lambda\varepsilon=0\text{.}%
\]
Now $\lambda=0$ is when
\[
\tau_{0}=\varepsilon p^{2}+\frac{1}{\varepsilon}\frac{l^{2}}{p^{2}}.
\]
Deriving respect $\tau$ we have for $\lambda^{\prime}(\tau)$ that%
\[
\left(  -\lambda^{\prime}\varepsilon\right)  (\varepsilon^{2}p^{2}%
-\lambda\varepsilon-\tau\varepsilon)+\left(  p^{2}-\lambda\varepsilon\right)
(-\lambda^{\prime}\varepsilon-\varepsilon)-\lambda^{\prime}\varepsilon=0
\]
evaluating at the critical value $\tau_{0}$ we have $\lambda=0$, then%
\[
\lambda^{\prime}(\tau_{0})=\frac{p^{2}}{(l/p)^{2}-p^{2}-1}\neq0
\]
as $p\neq0$. Therefore, the eigenvalue $\lambda$ crosses zero at $\tau_{0}$.

The solution to (\ref{equation-g}) is given by
\begin{equation}
g=\frac{\alpha^{2}-\beta^{2}}{\alpha^{2}+\beta^{2}}f, \label{g}%
\end{equation}
which leads to the eigenfunction
\[
(w_{1},w_{2},v)=\left(  \frac{am}{\alpha^{2}+\beta^{2}},\frac{bn}{\alpha
^{2}+\beta^{2}},\frac{-i}{p^{2}-\lambda\varepsilon}\right)  f_{l}(z).
\]

Moreover, we would like to choose the eigenfunctions such that the condition
$u_{-m,-n}=\bar{u}_{m,n}$ becomes $u_{-m,-n,l}=\bar{u}_{m,n,l}$. Then, the
eigenfunction $e_{m,n,l}(z)$ such that $e_{-m,-n,l}(z)=\bar{e}_{m,n,l}(z)$ are%
\begin{equation}
e_{m,n,l}(z)=\left(  \frac{ami}{\alpha^{2}+\beta^{2}},\frac{bni}{\alpha
^{2}+\beta^{2}},\frac{1}{p^{2}-\lambda\varepsilon}\right)  f_{l}(z).
\label{u_v}%
\end{equation}
Thus the eigenvalue problem (\ref{euler}) has eigenvalues $\lambda$ which is
zero at the critical value $\tau$ and corresponding eigenfunctions
$e_{m,n,l}(z)$.
\end{proof}

We have considered the domain with sides $\Omega$ in order to simplify the
spectrum, as it appears in terms of $(am)^{2}$, $(bn)^{2}$ and $l^{2}$.We conclude:

\begin{proposition}
For all $u\in\mathcal{H}$ we have
\[
u=\sum_{m,n\in\mathbb{Z}}\sum_{l\in\mathbb{Z}^{+}}u_{m,n,l}e_{m,n,l}%
(z)e^{i(amx+bny)}\text{,}%
\]
and
\[
Lu=\sum_{m,n\in\mathbb{Z}}\sum_{l\in\mathbb{Z}^{+}}\lambda_{m,n,l}%
u_{m,n,l}e_{m,n,l}(z)e^{i(amx+bny)}\text{.}%
\]

\end{proposition}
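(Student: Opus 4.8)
The plan is to combine the two layers of spectral information already established. Since $L=f'(0)$ is the self-adjoint operator associated with the quadratic form $\delta^2\tilde{F}(0)$ and its principal part is the block-diagonal elliptic operator $\mathrm{diag}(-\varepsilon\Delta,-\varepsilon\Delta,-\tfrac1\varepsilon\Delta)$ on $\Omega=T^2\times I$ with periodic conditions in $(x,y)$ and Dirichlet conditions in $z$, the operator $L$ has compact resolvent. Hence by the spectral theorem its eigenfunctions form a complete orthogonal basis of $L^2(\Omega;\mathbb{C}^3)$. The explicit diagonalization carried out in Proposition \ref{prop:linear} identifies these eigenfunctions as the separated products
\[
\Phi_{m,n,l}(x,y,z):=e_{m,n,l}(z)\,e^{i(amx+bny)},
\]
which satisfy $L\Phi_{m,n,l}=\lambda_{m,n,l}\Phi_{m,n,l}$ by (\ref{ei}). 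Proving the proposition therefore reduces to writing the expansion of $u$ in this basis and showing that $L$ may be applied term by term.

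For the expansion of $u$ I would argue in two stages. Given $u\in\mathcal{H}=H^2_0(T^2\times I;\mathbb{R}^3)$, expand it in the planar Fourier series $u=\sum_{m,n}u_{m,n}(z)e^{i(amx+bny)}$, with coefficients $u_{m,n}\in H^2_0(I;\mathbb{C}^3)$ satisfying the reality relation $u_{-m,-n}=\bar u_{m,n}$; this converges in $H^2$, hence in $L^2$. For each $(m,n)$, Proposition \ref{prop:linear} expands the coefficient as $u_{m,n}(z)=\sum_l u_{m,n,l}e_{m,n,l}(z)$. Substituting yields the claimed double series, which converges unconditionally in $L^2$ because $\{\Phi_{m,n,l}\}$ is an orthogonal basis; the normalization $e_{-m,-n,l}=\bar e_{m,n,l}$ from Proposition \ref{prop:linear} transfers the reality relation to $u_{-m,-n,l}=\bar u_{m,n,l}$, so the sum is real, as it must be.

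For the image $Lu$ the natural candidate is $\sum_{m,n,l}\lambda_{m,n,l}u_{m,n,l}\Phi_{m,n,l}$. To justify applying $L$ term by term I would invoke the spectral theorem once more: since the domain of $L$ is exactly $\mathcal{H}$, membership $u\in\mathcal{H}$ is equivalent to the summability $\sum_{m,n,l}|\lambda_{m,n,l}|^2|u_{m,n,l}|^2\,\|\Phi_{m,n,l}\|_{L^2}^2<\infty$, and this is precisely the condition that the candidate series converges in $L^2$. Closedness of $L$ then identifies the sum of that series with $Lu$, which is the second identity.

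The one step that requires genuine care, and the main obstacle, is this last interchange of the unbounded operator $L$ with an infinite sum: for an unbounded operator termwise application is not automatic, and it must be legitimized through the domain/summability characterization above rather than by linearity alone. The remaining ingredients are routine: orthogonality of the $z$-profiles across $l$ comes from that of the $f_l$, the degenerate reduced mode $m=n=0$ (where the substitution $(f,g,h)$ is singular) is treated separately since there $L$ already acts diagonally, and all convergence claims are standard consequences of ellipticity and the Rellich embedding.
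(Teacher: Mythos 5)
Your argument is correct and follows essentially the same route the paper takes implicitly: the paper states this proposition without any proof, as an immediate consequence of combining the planar Fourier expansion $u=\sum_{m,n}u_{m,n}(z)e^{i(amx+bny)}$ with the $z$-eigenfunction decomposition of $u_{m,n}$ established in Proposition \ref{prop:linear}. The only material you add is the standard spectral-theorem bookkeeping (completeness of the eigenbasis for the operator with compact resolvent, and the domain/summability characterization that legitimizes applying the unbounded operator $L$ term by term), which is sound and, if anything, more careful than the paper itself.
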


\begin{remark}
\label{remark:critical-field}In order to find the critical value of $\tau$, we
minimize (\ref{LinearFunctional}) over functions satisfying $\int_{\Omega
}(n_{1}^{2}+n_{2}^{2})=1$. Therefore, one can see that the first critical
field is the minimum of the $\tau_{m,n,l}$%
\begin{equation}
\tau_{c}=\min_{m,n,l}\tau_{m,n,l}. \label{threshold1}%
\end{equation}
Following the proof of Proposition \ref{prop:linear}, one can see that
$\tau_{m,n,l}$ can be achieved from the system (\ref{equation-f}%
)-(\ref{equation-h}) with $\lambda=0$. Noticing that (\ref{equation-f}) and
(\ref{equation-h}) are decoupled from the system, we rewrite them, with
$\delta=\varepsilon(\alpha^{2}+\beta^{2})$ and $\psi=\delta h$,
\begin{align*}
&  -\varepsilon f^{^{\prime\prime}}+\Big(\delta+\frac{1}{\varepsilon
}\Big)f-\frac{1}{\varepsilon}\psi=\tau f,\\
& \\
5ex]  &  -\psi^{^{\prime\prime}}+\frac{\delta}{\varepsilon}\psi=\frac{\delta
}{\varepsilon}f.
\end{align*}
Thus this system determines $\tau_{c}$ in (\ref{threshold1}) and it has been
studied for layer undulations in two dimensions \cite{G-J2} with the magnetic
field applied in the $x$ direction. Thus the first eigenvalue obtained in
\cite{G-J2} for two dimensions also provides the estimate of the critical
field for our three dimensional case. In particular, Theorem 4 of \cite{G-J2}
proves that the first eigenvalue is estimated as $\pi$ when the interval for
$z$ is $(-1,1)$. Since the scaling used in this paper gives the interval
$(-\pi/2,\pi/2)$ for $z$, it is easy to see that the first eigenvalue is
estimated
\begin{equation}
\tau_{c}\sim2. \label{first-eigenvalue}%
\end{equation}

\end{remark}

\section{Bifurcation}

\label{bifurcation}

{As anticipated, the first step is to identify the irreducible representations
associated to $G.$ Due to Schur's lemma, all eigenvalues }$\lambda$ of $L$
corresponding to this representation are the same, and then, if one does not
consider the symmetries the resonances make the proof of the bifurcation
difficult or impossible.

In the second step, the strategy is to identify the {isotropy groups that have
a fixed point space of dimension one in the irreducible representation, and
then, using the implicit function theorem in the restricted point space of this
isotropy groups, we can obtain simple bifurcation, that is without resonances.
}However, we do not claim to find them all, although the two we find are
enough to characterize the loss of stability of the uniform state to a
preferred branch while at the same time shows the emergence of other
interesting solutions satisfying different symmetries, as soon as one crosses
the critical field $\tau_{c}.$

{The symmetries of the bifurcation solutions are given by the isotropy groups
with these properties. The symmetries of the solutions are presented in the
third part of this section, and the bifurcation theorem in
the fourth subsection. }

\subsection{Irreducible representations}

Now, we proceed to find the irreducible representations. It suffices to
characterize the action on the elements of the Fourier basis.

\begin{proposition}
For fixed $m,n,l>0$, the action on $(u_{1},u_{2})=(u_{m,n,l},u_{m,-n,l})$ is
given by
\begin{align*}
\rho(\varphi,\theta)(u_{1},u_{2})  &  =e^{im\varphi}(e^{in\theta}%
u_{1},e^{-in\theta}u_{2})\text{,}\\
\rho(\kappa_{x})(u_{1},u_{2})  &  =(\bar{u}_{2},\bar{u}_{1})\text{, }\\
\rho(\kappa_{y})(u_{1},u_{2})  &  =(u_{2},u_{1})\text{,}\\
\rho(\kappa_{z})(u_{1},u_{2})  &  =(-1)^{l}(u_{1},u_{2}).
\end{align*}

\end{proposition}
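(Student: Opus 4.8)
The plan is to take the basis expansion of an arbitrary $u\in\mathcal H$ furnished by the preceding proposition,
\[
u=\sum_{m,n\in\mathbb Z}\sum_{l\in\mathbb Z^{+}}u_{m,n,l}\,e_{m,n,l}(z)\,e^{i(amx+bny)},
\]
apply each generator of $G$ in the coordinate form $\rho(\varphi,\theta)u=u(x+\varphi/a,y+\theta/b,z)$, $\rho(\kappa_x)u=R_xu(-x,y,z)$, $\rho(\kappa_y)u=R_yu(x,-y,z)$, $\rho(\kappa_z)u=-u(x,y,-z)$ recorded at the end of Section \ref{Setting}, and then re-expand the result in the same basis, reading off the induced transformation of the coefficients. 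The two ingredients that make the image land back in the basis are the eigenfunction identities $e_{m,n,l}(z)=R_xe_{-m,n,l}(z)=R_ye_{m,-n,l}(z)=(-1)^{l+1}e_{m,n,l}(-z)$ from Proposition \ref{prop:linear}, which (since $R_x^2=R_y^2=1$) I would use in the equivalent forms $R_xe_{m,n,l}=e_{-m,n,l}$, $R_ye_{m,n,l}=e_{m,-n,l}$, and $e_{m,n,l}(-z)=(-1)^{l+1}e_{m,n,l}(z)$, together with the reality constraint $u_{-m,-n,l}=\bar u_{m,n,l}$ inherited from $u_{-m,-n}=\bar u_{m,n}$.

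For the rotations the computation is immediate: the substitution $(x,y)\mapsto(x+\varphi/a,y+\theta/b)$ multiplies the $(m,n,l)$ mode by the phase $e^{i(m\varphi+n\theta)}$, so on the pair $(u_1,u_2)=(u_{m,n,l},u_{m,-n,l})$ one obtains $e^{im\varphi}(e^{in\theta}u_1,e^{-in\theta}u_2)$, which is the stated formula.

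For $\kappa_x$ I would feed the $(m,n,l)$ term into $R_xu(-x,y,z)$: the factor $e^{iamx}$ becomes $e^{-iamx}$, sending the spatial mode to $(-m,n)$, while $R_xe_{m,n,l}=e_{-m,n,l}$ converts the profile so that the term reappears as the $(-m,n,l)$ mode with the same scalar coefficient (note $R_x$ is real, so it introduces no conjugation). Relabeling, the coefficient of $\rho(\kappa_x)u$ at index $(m,n,l)$ equals $u_{-m,n,l}$, and the reality relation rewrites $u_{-m,n,l}=\overline{u_{m,-n,l}}$; applied to both entries of the pair this gives $\rho(\kappa_x)(u_1,u_2)=(\bar u_2,\bar u_1)$. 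The computation for $\kappa_y$ is the same with $R_ye_{m,n,l}=e_{m,-n,l}$ and $y\mapsto-y$ sending $(m,n,l)$ to $(m,-n,l)$; here the first index stays at $m$, so no reality relation and hence no conjugation is needed, yielding $\rho(\kappa_y)(u_1,u_2)=(u_2,u_1)$.

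Finally, for $\kappa_z$ one combines $-u(x,y,-z)$ with the parity relation $e_{m,n,l}(-z)=(-1)^{l+1}e_{m,n,l}(z)$; the overall sign is $-(-1)^{l+1}=(-1)^{l}$, every index is fixed, and the whole pair is multiplied by $(-1)^{l}$. The only real subtlety — and the step I would present most carefully — is the bookkeeping in the two reflection cases: one must keep straight that $R_x,R_y$ act on the three-vector \emph{value} of the eigenfunction whereas the flips $x\mapsto-x$, $y\mapsto-y$ act on the Fourier \emph{index}, and that rewriting the resulting coefficient $u_{-m,n,l}$ in terms of the chosen pair $(u_{m,n,l},u_{m,-n,l})$ forces the use of the reality condition. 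This is precisely what produces the complex conjugation in the $\kappa_x$ formula but not in the $\kappa_y$ one. Once this indexing is organized, the four displayed formulas follow by matching coefficients.
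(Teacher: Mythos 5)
Your argument is correct and follows the same route as the paper: expand $u$ in the eigenbasis, apply the coordinate form of each generator, and use the eigenfunction identities $R_xe_{-m,n,l}=e_{m,n,l}$, $R_ye_{m,-n,l}=e_{m,n,l}$, $e_{m,n,l}(-z)=(-1)^{l+1}e_{m,n,l}(z)$ together with the reality constraint $u_{-m,-n,l}=\bar u_{m,n,l}$ to read off the induced action on coefficients. Your explicit remark on why conjugation appears for $\kappa_x$ but not $\kappa_y$ is exactly the bookkeeping the paper carries out implicitly.
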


\begin{proof}
The action of $G$ in each Fourier component is given by%
\[
\rho(\varphi,\theta)u(x,y)=u(x+\varphi/a,y+\theta/b)=\sum_{m,n\in\mathbb{Z}%
}\left(  e^{im\varphi}e^{in\theta}u_{m,n}\right)  e^{i(amx+bny)}\text{.}%
\]
From this we obtain
\[
\rho(\varphi,\theta)u_{m,n}(z)=e^{im\varphi}e^{in\theta}u_{m,n}(z).
\]

In a similar way, we have that%
\begin{align*}
\rho(\kappa_{x})u_{m,n}  &  =R_{x}u_{-m,n}\text{, }\\
\rho(\kappa_{y})u_{m,n}  &  =R_{y}u_{m,-n}\text{,}\\
\text{ }\rho(\kappa_{z})u_{m,n}(z)  &  =-u_{m,n}(-z).
\end{align*}

Moreover, since $R_{x}e_{-m,n,l}(z)=e_{m,n,l}(z)$, then%
\[
R_{x}u_{-m,n}=\sum_{l\in\mathbb{Z}}u_{-m,n,l}R_{x}e_{-m,n,l}(z)=\sum
_{l\in\mathbb{Z}}u_{-m,n,l}e_{m,n,l}(z)\text{,}%
\]
and then the reflection $\kappa_{x}$ acts according to
\[
\rho(\kappa_{x})u_{m,n,l}=u_{-m,n,l}=\bar{u}_{m,-n,l}\text{.}%
\]
In a similar way, using the fact that $R_{y}e_{m,-n,l}(z)=e_{m,n,l}(z)$, one
can prove that
\[
\rho(\kappa_{y})u_{m,n,l}=u_{m,-n,l}.
\]

Furthermore, since $-e_{m,n,l}(-z)=(-1)^{l}e_{m,n,l}(z)$, then $\ $%
\[
\rho(\kappa_{z})u_{m,n}(z)=-u_{m,n}(-z)=\sum_{l\in\mathbb{Z}^{+}}%
(-1)^{l}u_{m,n,-l}e_{m,n,l}(z)\text{.}%
\]

\end{proof}

We conclude that the irreducible representations correspond to $(u_{1}%
,u_{2})\in\mathbb{C}^{2}$ with the previous action when $m,n\neq0$. In the
case that $m$ or $n$ are zero, the irreducible representations are
$\mathbb{C}$, however we do not analyze this case because this produces a
bifurcation of stationary solutions in $x$ or $y$, which may be analyzed
directly \cite{G-J2}.

\subsection{Isotropy groups}

For each representation there are at least two kind of isotropy groups that
have fixed point spaces of dimension one. Bear in mind that, as mentioned
earlier, in this section we do not claim to find all of them, but at least
two. Hereafter, we denote
\[
(u_{1},u_{2})=(u_{m,n,l},u_{m,-n,l})\in\mathbb{C}^{2}%
\]
the irreducible representation {for a fixed }$m,n,l$.

The irreducible representation for $l$ even is fixed by the element
$\kappa_{z}$. Moreover, {the point $(u_{1},u_{2})=(r,0)$ for $r \in\mathbb{R}$
has an isotropy group given by}%
\[
O(2)\times\mathbb{Z}_{2}=\left\langle (\varphi/m,-\varphi/n),\kappa_{x}%
\kappa_{y},\kappa_{z}\right\rangle \text{,}%
\]
where { $\langle g_{1},g_{2},\ldots,g_{n}\rangle$ denotes, as usual, the
subgroup generated by the elements $g_{1},\ldots,g_{n}$ of }${G}${. The other
isotropy groups }corresponds to $(u_{1},u_{2})=(r,r)$ and is given by%
\[
D\times\mathbb{Z}_{2}=\left\langle (\pi/m,-\pi/n),\kappa_{x},\kappa_{y}%
,\kappa_{z}\right\rangle \text{.}%
\]

For $l$ odd one isotropy group corresponds to $(u_{1},u_{2})=(ir,0)$ and is
given by%
\[
\tilde{O}(2)=\left\langle (\varphi/m,-\varphi/n),\kappa_{x}\kappa_{y}%
\kappa_{z}\right\rangle \text{.}%
\]
The second isotropy group corresponds to $(u_{1},u_{2})=(r,-r)$ is given by%
\[
\tilde{D}=\left\langle (\pi/m,-\pi/n),\kappa_{z}\kappa_{x},\kappa_{z}%
\kappa_{y}\right\rangle
\]

It is not hard to prove that the fixed point spaces of these groups have real
dimension equal to one, where the fixed point spaces are generated by
$(1,0);(1,1);(i,0)$ and $(1,-1)$ respectively.

\subsection{Symmetries}

{The previous isotropy groups are relevant because they give the symmetries of
the bifurcating solutions. Before proving the existence of these branches, we
want to present the symmetries of the bifurcating solutions, i.e. the fixed
point spaces of the previous groups. }

Since the elements $(\varphi/m_{0},-\varphi/n_{0})$, $\kappa_{x}\kappa_{y}$
and $\kappa_{z}$ generate $O(2)\times\mathbb{Z}_{2}$, solutions with this
isotropy group must satisfy
\begin{align}
u(x,y,z)  &  =R_{x}R_{y}u(-x,-y,z)=-u(x,y,-z)\label{Oa}\\
&  =u(x+\varphi/\alpha,y-\varphi/\beta,z)\text{,}\nonumber
\end{align}
where $\alpha=am_{0}$ and $\beta=bn_{0}$.

From the first symmetry, we have that
\[
u=\sum_{m,n\in\mathbb{Z}}e^{i((m/m_{0})-(n/n_{0}))\varphi}u_{m,n}%
e^{i(amx+bny)}\text{.}%
\]
Then $u_{m,n}=0$ unless $(m/m_{0})-(n/n_{0})=0$ or $m/n=m_{0}/n_{0}$, then
$m=jm_{0}$ and $n=jn_{0}$ for $j\in\mathbb{Z}$ and%

\[
u=\sum_{j\in\mathbb{Z}}u_{j}(z)e^{ij(am_{0}x+bn_{0}y)}%
\]
with $u_{j}(z)=u_{jm_{0},jn_{0}}(z)$ with
\[
u_{j}(z)=\bar{u}_{-j}(z)=R_{x}R_{y}\bar{u}_{j}(z)=-u_{j}(-z).
\]

Since the elements $(\varphi/m_{0},-\varphi/n_{0})$, $\kappa_{x}\kappa
_{y}\kappa_{z}$ generate $\tilde{O}(2)$, they must satisfy
\begin{equation}
u(x,y,z)=u(x+\varphi/\alpha,y-\varphi/\beta,z)=R_{z}u(-x,-y,-z)\text{,}
\label{Ob}%
\end{equation}
where $\ $ $R_{z}=-R_{x}R_{y}$. Therefore, $u$ is as before but with
$u_{j}(z)=u_{jm_{0},jn_{0}}(z)$ such that
\[
u_{j}(z)=\bar{u}_{-j}(z)=R_{z}\bar{u}_{j}(-z).
\]

The isotropy group $D\times\mathbb{Z}_{2}$ has the generators $\kappa_{x}$,
$\kappa_{y}$, $\kappa_{z}$ and $(\pi/m_{0},-\pi/n_{0})$, then solutions with
this isotropy group must satisfy
\begin{align}
u(x,y,z)  &  =R_{x}u(-x,y,z)=R_{y}u(x,-y,z)=-u(x,y-z)\label{Da}\\
&  =u(x+\pi/\alpha,y-\pi/\beta,z)\text{,}\nonumber
\end{align}
where $\alpha=am_{0}$, $\beta=bn_{0}$. In this case%
\[
u=\sum_{m,n\in\mathbb{Z}}e^{i((m/m_{0})-(n/n_{0}))\pi}u_{m,n}(z)e^{i(amx+bny)}%
\text{, }%
\]
then $u_{m,n}=0$ unless $(m/m_{0})-(n/n_{0})\in2\mathbb{Z}$. Moreover, we have
that
\[
u_{m,n}(z)=\bar{u}_{-m,-n}(z)=R_{x}u_{-m,n}(z)=R_{y}u_{m,-n}(z)=-u_{m,n}%
(-z)\text{.}%
\]

The group $\tilde{D}$ has generators $\kappa_{z}\kappa_{x}$, $\kappa_{z}%
\kappa_{y}$ and $(\pi/m_{0},-\pi/n_{0})$. Then, solutions with this isotropy
group $\tilde{D}$ must satisfy
\begin{align}
u(x,y,z)  &  =-R_{x}u(-x,y,-z)=-R_{y}u(x,-y,-z)\label{Db}\\
&  =u(x+\pi/\alpha,y-\pi/\beta,z).\nonumber
\end{align}
As before, we have that $u_{m,n}=0$ unless $(m/m_{0})-(n/n_{0})\in2\mathbb{Z}$
and%
\[
u_{m,n}(z)=\bar{u}_{-m,-n}(z)=-R_{x}u_{-m,n}(-z)=-R_{y}u_{m,-n}(-z)\text{.}%
\]

\subsection{Bifurcation theorem}

\begin{theorem}
For each fixed $m_{0},n_{0},l_{0}\geq1$, the uniformly layered state $u=0$ has
two bifurcations of critical points of \eqref{energy} starting from the critical field $\tau_{m_{0},n_{0}%
,l_{0}}$ in $\mathcal{H},$ except for a finite union of hypersurfaces in the set of parameters
$(a,b,\varepsilon)$. One of the bifurcations has the symmetries of $O(2)\times
\mathbb{Z}_{2}$ and the other one of $D\times\mathbb{Z}_{2}$ for $l_{0}$ even, and $\tilde
{O}(2)$ and $\tilde{D}$ for $l_{0}$ odd.
\end{theorem}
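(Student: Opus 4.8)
The plan is to apply the abstract bifurcation result, Theorem \ref{T5}, with $\mathcal{L}=L(\tau)$ to the restricted map $f^{H}$, separately for each of the four isotropy groups found above: $H\in\{O(2)\times\mathbb{Z}_{2},\,D\times\mathbb{Z}_{2}\}$ when $l_{0}$ is even and $H\in\{\tilde{O}(2),\,\tilde{D}\}$ when $l_{0}$ is odd. Writing $\tau_{0}:=\tau_{m_{0},n_{0},l_{0}}$, the purpose of passing to $Fix(H)$ is exactly to trade the resonant, higher--dimensional kernel of $L(\tau_{0})$ for a one--dimensional kernel of $L^{H}(\tau_{0})$, so that a degenerate crossing becomes a simple one and the hypothesis ``$\eta^{H}(\tau_{0})$ odd'' can be verified.

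First I would check that the hypotheses on $\mathcal{L}$ descend to the restriction. The operator $L$ is self--adjoint, being the Hessian $\delta^{2}\tilde{F}(0)$ of \eqref{LinearFunctional}, and Fredholm since its resolvent is compact (this is the compactness of $T$ used in Proposition \ref{prop:linear}); because $Fix(H)$ is a closed subspace left invariant by the equivariant operator $L$ (Lemma \ref{ActionProp}), $L^{H}$ inherits both properties on $Fix(H)\cap\mathcal{H}$. The crux is then to establish $\dim_{\mathbb{R}}\ker L^{H}(\tau_{0})=1$. By the spectral decomposition of Section \ref{spectrum}, a function lies in $\ker L(\tau_{0})$ precisely when it is supported on Fourier modes with $\lambda_{m,n,l}(\tau_{0})=0$, i.e. with $\tau_{m,n,l}=\tau_{0}$. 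The modes of the representation attached to $(m_{0},n_{0},l_{0})$ form the space $(u_{1},u_{2})=(u_{m_{0},n_{0},l_{0}},u_{m_{0},-n_{0},l_{0}})\in\mathbb{C}^{2}$, whose intersection with the four fixed--point spaces is the single real line spanned by $(1,0)$, $(1,1)$, $(i,0)$, $(1,-1)$, respectively; on that line $L^{H}$ acts as multiplication by $\lambda_{m_{0},n_{0},l_{0}}(\tau)$. Hence this representation contributes exactly one real dimension to $\ker L^{H}(\tau_{0})$, and no other admissible mode contributes provided no competing triple shares the critical value $\tau_{0}$.

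Granting the one--dimensional kernel, the net crossing number is read off from Proposition \ref{prop:linear}: along the kernel the eigenvalue of $L^{H}(\tau)$ is $\lambda_{m_{0},n_{0},l_{0}}(\tau)$, and $\lambda'(\tau_{0})=p^{2}/\big((l_{0}/p)^{2}-p^{2}-1\big)<0$ because $(l_{0}/p)^{2}\le 1<p^{2}+1$. The single eigenvalue therefore crosses zero transversally, the restricted Morse index jumps by one, and $\eta^{H}(\tau_{0})=\pm1$ is odd. Theorem \ref{T5} then yields a local bifurcation of zeros of $f^{H}$ from $(0,\tau_{0})$ in $Fix(H)\cap\mathcal{H}\times\mathbb{R}$; since $f=\nabla\tilde{F}$ these zeros are critical points of $\tilde{F}$, hence of the energy \eqref{energy} through the diffeomorphism \eqref{cif}, and they carry exactly the symmetries of $H$ recorded in \eqref{Oa}--\eqref{Db}. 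Taking the two groups attached to the parity of $l_{0}$ produces the two asserted branches.

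The step I expect to be the main obstacle is the one--dimensionality of the kernel, and it is here that the exceptional set enters. Each coincidence $\tau_{m,n,l}=\tau_{0}$ for a competing admissible triple is an analytic relation among $(a,b,\varepsilon)$ and hence defines an analytic hypersurface; what must be argued is that only finitely many triples can ever produce such a coincidence. This follows from the growth of $\tau_{m,n,l}=\varepsilon p^{2}+\varepsilon^{-1}(l/p)^{2}$ with $p^{2}=(am)^{2}+(bn)^{2}+l^{2}$: on any bounded region of parameters $\tau_{0}$ stays bounded while $\tau_{m,n,l}\to\infty$ as $|m|+|n|+l\to\infty$, so all but finitely many triples satisfy $\tau_{m,n,l}>\tau_{0}$ throughout the region and cannot resonate. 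Excluding the resulting finite union of hypersurfaces --- and its union over the four groups, still finite --- guarantees $\dim\ker L^{H}(\tau_{0})=1$ and closes the argument.
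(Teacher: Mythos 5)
Your proposal is correct and follows essentially the same route as the paper: restrict the equivariant gradient to the one--dimensional fixed--point spaces of the four isotropy groups, exclude the finite union of resonance hypersurfaces $\tau_{m,n,l}=\tau_{m_0,n_0,l_0}$ in $(a,b,\varepsilon)$ using $\tau_{m,n,l}\to\infty$, and apply Theorem \ref{T5} to the resulting simple crossing. You in fact make explicit two points the paper leaves implicit --- that $L^{H}$ inherits the self--adjoint Fredholm property and that $\lambda'(\tau_{0})<0$ so $\eta^{H}(\tau_{0})=\pm1$ is odd --- which only strengthens the argument.
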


\begin{proof} We appeal to Theorem \ref{T5}.
Since the eigenvalue $\lambda_{m,n,l}$ is zero only at $\tau_{m,n,l}$, and
$\tau_{m,n,l}\rightarrow\infty$ as $m,n,l\rightarrow\infty$, then
$\lambda_{m,n,l}(\tau_{m_{0},n_{0},l_{0}})$ is different from zero except for
a finite number of $(m,n,l)\in\mathbb{Z}^{3}$. Moreover, the condition
$\tau_{m,n,l}-\tau_{m_{0},n_{0},l_{0}}=0$ defines a hypersurface
in the space of parameters $(a,b,\varepsilon).$ On the other hand $\tau_{m,n,l}-\tau
_{m_{0},n_{0},l_{0}}$ is identically zero only if $l^{2}=l_{0}^{2}$ and $m_{0}%
^{2}=m^{2}$ and $n_{0}^{2}=n^{2}.$

Therefore, there are no resonances with others eigenvalues $\lambda_{m_{0}%
,n_{0},l_{0}}$ except for the set of parameters $(a,b,\varepsilon)$ in this finite number of hypersurfaces. Thus, we conclude that the
only eigenvalue $\lambda_{m,n,l}$ close to zero at $\tau_{m_{0},n_{0},l_{0}}$
is $\lambda_{m_{0},n_{0},l_{0}}$. Moreover, {the kernel of }${L}${ is given by the space corresponding to the
irreducible representation $(u_{m_{0},n_{0},l_{0}},u_{m_{0},-n_{0},l_{0}}%
)\in\mathbb{C}^{2}$, i.e. the eigenvalue $\lambda_{m_{0},n_{0},l_{0}}$ has
real dimension equal to four. We then take advantage of the actions
of the isotropy groups that have a fixed point space of real dimension equal
to one in $\mathbb{C}^{2}$}; in either one of the fixed point spaces ( $O(2)\times
\mathbb{Z}_{2}$ and $D\times\mathbb{Z}_{2}$ for $l_{0}$ even, and $\tilde
{O}(2)$ and $\tilde{D}$ for $l_{0}$ odd) of these groups the
kernel has real dimension equal to one. Therefore, the restricted map $f$ to
the fixed point space of the isotropy group. For this restricted map only one
eigenvalue crosses zero, which then yields the desired result as an application of Theorem
\ref{BifTheo}.
\end{proof}

Unfortunately, there are resonances in the hypersurface of parameters $a=b$,
i.e. we have that $\lambda_{m_{0},n_{0},l}=\lambda_{n_{0},m_{0},l}$. In this
case, the previous theorem does not apply, but we can improve the results of
the previous theorem for this particular case.

\begin{proposition}
\label{propab} In the case that $a=b$, there is always a bifurcation of
solutions from $\tau_{m,n,l}$ for the groups $O(2)\times\mathbb{Z}_{2}$ and
$\tilde{O}(2)$, as in the previous theorem{, and for the groups}
$D\times\mathbb{Z}_{2}$ and $\tilde{D}$ if in addition the only solutions of
\[
m^{2}+n^{2}=m_{0}^{2}+n_{0}^{2}%
\]
are $(m,n)$ is equal to $(m_{0},n_{0})$ or $(n_{0},m_{0})$.
\end{proposition}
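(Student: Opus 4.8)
The plan is to mirror the proof of the previous bifurcation theorem, but to account for the extra degeneracy created by the equality $a=b$. When $a=b$ the critical fields $\tau_{m,n,l}=\varepsilon p^{2}+\varepsilon^{-1}(l/p)^{2}$ depend on $(m,n)$ only through $p^{2}=a^{2}(m^{2}+n^{2})+l^{2}$, so $\lambda_{m,n,l}=\lambda_{m',n',l}$ whenever $m^{2}+n^{2}=m'^{2}+n'^{2}$. In particular $\lambda_{m_{0},n_{0},l_{0}}=\lambda_{n_{0},m_{0},l_{0}}$, and $\ker L(\tau_{m_{0},n_{0},l_{0}})$ is no longer a single irreducible representation: it is the sum of the blocks $(u_{m,n,l_{0}},u_{m,-n,l_{0}})$ over all integer solutions of $m^{2}+n^{2}=m_{0}^{2}+n_{0}^{2}$. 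I would also record the new symmetry available at $a=b$, namely the reflection $\sigma:(x,y)\mapsto(y,x)$ (acting on the director by interchanging $w_{1},w_{2}$), which leaves $F$ invariant and interchanges the block $(u_{m,n,l_{0}},u_{m,-n,l_{0}})$ with $(u_{n,m,l_{0}},u_{n,-m,l_{0}})$; thus the symmetry group is enlarged and the resonant partner $(n_{0},m_{0})$ is forced into the kernel.

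With this in hand the strategy is again to restrict $f$ to the fixed--point space of a suitable isotropy group $H$ and to apply Theorem \ref{T5}. The decisive simplification is furnished by Proposition \ref{prop:linear}: the slope $\lambda'(\tau_{0})=p^{2}/((l/p)^{2}-p^{2}-1)$ depends only on $(p^{2},l)$, so every eigenvalue sitting in the resonant kernel crosses zero in the same direction at $\tau_{0}$. Consequently the net crossing number $\eta^{H}(\tau_{0})$ equals, up to sign, the real dimension of $\mathrm{Fix}(H)\cap\ker L(\tau_{0})$, and the oddness hypothesis of Theorem \ref{T5} reduces to showing that this intersection is one--dimensional.

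For the groups $O(2)\times\mathbb{Z}_{2}$ and $\tilde{O}(2)$ the continuous generator $(\varphi/m_{0},-\varphi/n_{0})$ multiplies the component $u_{m,n,l_{0}}$ by $e^{i\varphi(m/m_{0}-n/n_{0})}$; invariance under the whole one--parameter subgroup forces $m/m_{0}=n/n_{0}$, so among the resonant modes only $(m,n)=\pm(m_{0},n_{0})$ survive, and in particular the partner $(n_{0},m_{0})$ is annihilated as soon as $m_{0}\neq n_{0}$. Hence $\mathrm{Fix}(H)\cap\ker L$ stays one--dimensional no matter how many further solutions the equation $m^{2}+n^{2}=m_{0}^{2}+n_{0}^{2}$ has, and Theorem \ref{T5} yields the corresponding branch (of type $O(2)\times\mathbb{Z}_{2}$ when $l_{0}$ is even and $\tilde{O}(2)$ when $l_{0}$ is odd) with no extra hypothesis; this is the content of the word ``always''. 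For $D\times\mathbb{Z}_{2}$ and $\tilde{D}$ the surviving generator along the torus is the discrete element $(\pi/m_{0},-\pi/n_{0})$, whose invariance condition is only the parity requirement $m/m_{0}-n/n_{0}\in2\mathbb{Z}$; the diagonal mode still contributes the fixed vector $(1,1)$ (resp. $(1,-1)$), but a second resonant solution could in principle also meet the parity condition and enlarge $\mathrm{Fix}(H)\cap\ker L$ to real dimension two, in which case $\eta^{H}$ becomes even and Theorem \ref{T5} is silent. Imposing that $(m_{0},n_{0})$ and $(n_{0},m_{0})$ be the only solutions removes every such extra mode, and one checks directly (using that $(\pi/m_{0},-\pi/n_{0})$ acts nontrivially on the partner block for $m_{0}\neq n_{0}$) that the partner itself does not enter; thus the fixed space is one--dimensional and the discrete--isotropy branch follows as before.

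The main obstacle is precisely this dimension count in the resonance--enlarged kernel. One must (i) verify that, off a finite union of hypersurfaces in $\varepsilon$ inherited from the previous theorem, no mode with a different $l$ or a different value of $p^{2}$ enters the kernel, so that the kernel is exactly the direct sum of the blocks $(u_{m,n,l_{0}},u_{m,-n,l_{0}})$ over the solutions of $m^{2}+n^{2}=m_{0}^{2}+n_{0}^{2}$; and (ii) determine, for each of the two continuous and two discrete isotropy groups, exactly which of these blocks meet the corresponding fixed--point space, keeping careful track of the reflections $\kappa_{x},\kappa_{y},\kappa_{z}$ and of the new swap $\sigma$. The continuous case is robust because the one--parameter subgroup pins down the diagonal direction; the delicate bookkeeping, and the reason the Diophantine hypothesis is invoked, is entirely confined to the discrete groups, where only a parity condition is available and extra lattice solutions of $m^{2}+n^{2}=m_{0}^{2}+n_{0}^{2}$ can spoil the parity of the net crossing number.
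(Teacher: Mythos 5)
Your proposal is correct and follows the paper's strategy in all essentials: when $a=b$ the resonant kernel is the sum of the blocks indexed by integer solutions of $m^{2}+n^{2}=m_{0}^{2}+n_{0}^{2}$; since all these eigenvalues cross zero with the same sign of $\lambda'(\tau_{0})$, oddness of $\eta^{H}(\tau_{0})$ reduces to showing $\dim\left(Fix(H)\cap\ker L\right)=1$; for $O(2)\times\mathbb{Z}_{2}$ and $\tilde{O}(2)$ the one-parameter generator forces $(m,n)=j(m_{0},n_{0})$ and so eliminates every resonant partner unconditionally, while for $D\times\mathbb{Z}_{2}$ and $\tilde{D}$ only the parity condition $(m/m_{0})-(n/n_{0})\in2\mathbb{Z}$ is available and the Diophantine hypothesis is invoked. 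The one genuine divergence is your treatment of the surviving partner $(n_{0},m_{0})$ in the dihedral case: the paper handles the double eigenvalue $\lambda_{m_{0},n_{0},l}=\lambda_{n_{0},m_{0},l}$ by enlarging the isotropy group with the swap $\rho(\kappa)u(x,y)=u(y,x)$, available only because $a=b$, and working in the correspondingly smaller fixed-point space; you instead check directly that the generator $(\pi/m_{0},-\pi/n_{0})$ already excludes the block $(u_{n_{0},m_{0}},u_{n_{0},-m_{0}})$ from $Fix(D\times\mathbb{Z}_{2})$ whenever $m_{0}\neq n_{0}$. That check is valid, but you should spell it out: it amounts to showing that $m_{0}n_{0}$ divides $n_{0}^{2}-m_{0}^{2}$ (respectively $n_{0}^{2}+m_{0}^{2}$, with even quotient) only when $m_{0}=n_{0}$, which follows by writing $m_{0}=dm'$, $n_{0}=dn'$ with $\gcd(m',n')=1$ and deducing $m'=n'=1$. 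Your route is slightly more elementary (no enlargement of the group is needed, and the case $m_{0}=n_{0}$ is trivial since the partner coincides with the original block), while the paper's use of $\kappa$ treats both cases uniformly and records the extra symmetry that is the structural reason for the double eigenvalue.
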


\begin{proof}
In this case, we can use the same arguments as in the previous theorem, we are
only left to verify that there are no resonances, that is, we only need to
consider the cases when $\tau_{m,n,l}-\tau_{m_{0},n_{0},l_{0}}$ is identically
zero(viewing both expressions as functions of $\varepsilon$). In the fixed point space $O(2)\times\mathbb{Z}_{2}$ and $\tilde{O}(2)$
other resonances cannot appear, since the fixed point space has functions $u$
with components $u_{m,n}$ equal to zero unless $(m,n)=j(m_{0},n_{0})$ for
$j\in\mathbb{Z}$, and then, we have that $\lambda_{m,n,l}\neq\lambda
_{m_{0},n_{0},l}$ for $n>0$ unless $(m,n)=(m_{0},n_{0})$.

In the case of the groups $D\times\mathbb{Z}_{2}$ and $\tilde{D}$, we have
from the assumptions that $m^{2}+n^{2}=m_{0}^{2}+n_{0}^{2}$ only for $(m,n)$
equal to $(m_{0},n_{0})$ and $(n_{0},m_{0})$, then we have the double
eigenvalue $\lambda_{m_{0},n_{0},l}=\lambda_{n_{0},m_{0},l}$. Actually, this
double eigenvalue is a consequence of the invariance of $F$ by another action
$\kappa\in\mathbb{Z}_{2}$ given by $\rho(\kappa)u(x,y)=u(y,x)$, due to
$a=b$. Using this extra symmetry, it is not hard to see that in the fixed
point space of $\kappa$ and the group $D\times\mathbb{Z}_{2}$ or $\tilde{D}$,
no additional resonances exist by assumptions. Therefore, the desired conclusion follows again in this case thanks to Theorem \ref{T5}.
\end{proof}

\begin{remark}
The assumptions in the case $a=b$ are true for many $(m_{0},n_{0})$, including
the case $m_{0}=n_{0}=1$.
\end{remark}

\begin{remark}
As all the eigenvalues $\lambda_{m_{0},n_{0},l}$ cross in the same direction,
then $\tau_{m_{0},n_{0},l_{0}}$ is always a bifurcation point regardless of the
multiplicity. But the previous theorems allows us to obtain specific
information about different branches, the symmetries of the branches, the
local continuity of the branches, and local estimates given in the following section.
\end{remark}

\subsection{Local estimates} \label{localestimates}

We complete the bifurcation analysis by providing the asymptotic portrait of the branches found in Theorem \ref{Tmain}. As mentioned in the introduction, these properties follow from standard bifurcation arguments \cite{CrandallRab}.

Since the bifurcation is simple, we can estimate the local bifurcating
branches in the cases $a\neq b$.
\vskip.1in
{\bf 1. The branches with $O(2)\times\mathbb{Z}_{2}$ symmetry}
\vskip.1in
The bifurcation with group $O(2)\times\mathbb{Z}_{2}$ has leading term
$(u_{m_{0},n_{0},l_{0}},u_{m_{0},-n_{0},l_{0}})$ $=(r/2,0)$, from where we gather

\vskip.1in
\fbox{
\addtolength{\linewidth}{-2\fboxsep}%
\addtolength{\linewidth}{-2\fboxrule}%
\begin{minipage}{\linewidth}
\[
u=r\left(  e_{m_{0},n_{0},l_{0}}(z)e^{i(am_{0}x+bn_{0}y)}+e_{-m_{0}%
,-n_{0},l_{0}}(z)e^{-i(am_{0}x+bn_{0}y)}\right)  +O(r^{2})
\]
\end{minipage}
}
\vskip.1in

where $O(r^{2})$ is a function in the fixed point space of $O(2)\times
\mathbb{Z}_{2}$ of order $r^{2}$. Moreover, using (\ref{u_v}) we conclude that%
\vskip.1in
\fbox{
\addtolength{\linewidth}{-2\fboxsep}%
\addtolength{\linewidth}{-2\fboxrule}%
\begin{minipage}{\linewidth}
\begin{eqnarray*}
u=r\left(  \frac{-am_{0}\sin(am_{0}x+bn_{0}y)}{\alpha^{2}+\beta^{2}}%
,\frac{-bn_{0}\sin(am_{0}x+bn_{0}y)}{\alpha^{2}+\beta^{2}},\frac{\cos
(am_{0}x+bn_{0}y)}{p^{2}-\lambda\varepsilon}\right) &&  \\
\times\sin(l_{0}z)+O(r^{2}%
)\text{.}
\end{eqnarray*}
\end{minipage}
}
\vskip.1in

The previous solutions satisfy the corresponding symmetry (\ref{Oa}). Also,
notice that $u$ is the parameterization of $(\psi(u),n(u))$ given in
(\ref{cif}), then one can approximately characterize these functions in the local
branches%
\vskip.1in
\fbox{
\addtolength{\linewidth}{-2\fboxsep}%
\addtolength{\linewidth}{-2\fboxrule}%
\begin{minipage}{\linewidth}
\[\psi =\psi_{0}+i\frac{\psi_{0}}{c\varepsilon}\left(  \frac{\cos
(am_{0}x+bn_{0}y)}{p^{2}-\lambda\varepsilon}\right)  r\sin(l_{0}%
z)+O(r^{2}) \text{,}\]
\begin{eqnarray*}
n   =\left(  \frac{-am_{0}\sin(am_{0}x+bn_{0}y)}{\alpha^{2}+\beta^{2}}%
r\sin(l_{0}z),\frac{-bn_{0}\sin(am_{0}x+bn_{0}y)}{\alpha^{2}+\beta^{2}}%
r\sin(l_{0}z),1\right)&&  \\
+O(r^{2})\text{.}
\end{eqnarray*}
\end{minipage}
}
\vskip.1in

\vskip.1in
{\bf 1. The branches with $D\times\mathbb{Z}_{2}$ symmetry}
\vskip.1in

For the group $D\times\mathbb{Z}_{2}$ we have that $(u_{m_{0},n_{0},l_{0}%
},u_{m_{0},-n_{0},l_{0}})=(r/2,r/2)$, thus%
\vskip.1in
\fbox{
\addtolength{\linewidth}{-2\fboxsep}%
\addtolength{\linewidth}{-2\fboxrule}%
\begin{minipage}{\linewidth}
\[
u=r\operatorname{Re}(e_{m_{0},n_{0},l_{0}}(z)e^{i(am_{0}x+bn_{0}y)}%
+e_{m_{0},-n_{0},l_{0}}(z)e^{i(am_{0}x-bn_{0}y)})+O(r^{2})\text{,}%
\]
\end{minipage}
}
\vskip.1in
and as before we have the local solutions that satisfy the symmetry
(\ref{Da}) given by%
\vskip.1in
\fbox{
\addtolength{\linewidth}{-2\fboxsep}%
\addtolength{\linewidth}{-2\fboxrule}%
\begin{minipage}{\linewidth}
\begin{eqnarray*}
u=\sum_{n=\pm n_{0}}r\Big(  \frac{-am_{0}\sin(am_{0}x+bny)}{\alpha^{2}%
+\beta^{2}},\frac{-bn\sin(am_{0}x+bny)}{\alpha^{2}+\beta^{2}},\frac
{\cos(am_{0}x+bny)}{p^{2}-\lambda\varepsilon}\Big) &&  \\
\times \sin(l_{0}%
z)+O(r^{2})\text{.}
\end{eqnarray*}
\end{minipage}
}
\vskip.1in

For the group $\tilde{O}(2)$ we have that $(u_{m,n,l},u_{m,-n,l})=(-ir/2,0)$,
and so
\vskip.1in
\fbox{
\addtolength{\linewidth}{-2\fboxsep}%
\addtolength{\linewidth}{-2\fboxrule}%
\begin{minipage}{\linewidth}
\[
u=r\operatorname{Im}(e_{m_{0},n_{0},l_{0}}(z)e^{i(am_{0}x+bn_{0}y)}%
)+O(r^{2})\text{.}%
\]
\end{minipage}
}
\vskip.1in
Using (\ref{u_v}) we have that the solution that satisfy (\ref{Ob}) locally
is
\vskip.1in
\fbox{
\addtolength{\linewidth}{-2\fboxsep}%
\addtolength{\linewidth}{-2\fboxrule}%
\begin{minipage}{\linewidth}
\begin{eqnarray*}
u=r\left(  \frac{am_{0}\cos(am_{0}x+bn_{0}y)}{\alpha^{2}+\beta^{2}}%
,\frac{bn_{0}\cos(am_{0}x+bn_{0}y)}{\alpha^{2}+\beta^{2}},\frac{-\sin
(am_{0}x+bn_{0}y)}{p^{2}-\lambda\varepsilon}\right)&& \\
\times\cos(l_{0}z)+O(r^{2}%
)\text{.}
\end{eqnarray*}
\end{minipage}
}
\vskip.1in

For the group $\tilde{D}$ we have that $(u_{m,n,l},u_{m,-n,l})=(r/2,-r/2)$,%
\vskip.1in
\fbox{
\addtolength{\linewidth}{-2\fboxsep}%
\addtolength{\linewidth}{-2\fboxrule}%
\begin{minipage}{\linewidth}
\[
u=r\operatorname{Re}(e_{l_{0},m_{0},n_{0}}(z)e^{iam_{0}x}e^{ibn_{0}y}%
-e_{l_{0},m_{0},-n_{0}}(z)e^{iam_{0}x}e^{-ibn_{0}y})+O(r^{2})\text{,}%
\]
\end{minipage}
}
\vskip.1in
where a similar expression with the symmetry (\ref{Db}) may be obtained.

\section{2D planar configuration}

\label{planar}

\label{section:planar} In this section, we study the de Gennes free energy in
the cross section of the rectangular box while assuming the constant smectic
order parameter and find the characteristics of the minimizers well above the
critical field. We also assume a sufficiently strong magnetic field which
forces a planar configuration. Thus by taking $\psi= e^{i \varphi/ c
\varepsilon} $ and $\tau= 1/\varepsilon^{\delta}$ with $1< \delta<2$, the
energy (\ref{energy}) becomes
\begin{equation}
\label{energy-planar}F_{\varepsilon} (\varphi, n) = \int_{\mathbb{T}^{2} }
\left(  \varepsilon|\nabla n|^{2} + \frac1 \varepsilon|\nabla\varphi-
n^{\parallel}|^{2} + \frac{ 1}{ \varepsilon^{\delta}} n_{3}^{2} \right)
\end{equation}
where $\varphi= \varphi(x,y)$, $n = (n^{\parallel}, n_{3}) = (n_{1}, n_{2},
n_{3})$. We assume that $\mathbb{T}^{2} $ is a flat torus identified with
$[0,1)^{2}$. If $(\varphi, n)$ is a minimizer of $F_{\varepsilon}$, then
$\varphi= \varphi_{n}$ satisfies
\begin{equation}
\label{vp-n}\Delta\varphi_{n} = \nabla\cdot n^{\parallel} \quad\mbox{in}
\quad\mathbb{T}^{2} \qquad\mbox{and} \qquad\int_{\mathbb{T}^{2} } \varphi_{n}
= 0.
\end{equation}
Letting $\varphi_{n}$ be a solution to (\ref{vp-n}) and defining
\begin{equation}
\label{energy-planar}G_{\varepsilon}(n) = \int_{\mathbb{T}^{2} } \left(
\varepsilon|\nabla n|^{2} + \frac1 \varepsilon|\nabla\varphi_{n} -
n^{\parallel}|^{2} + \frac{ 1}{ \varepsilon^{\delta}} n_{3}^{2} \right)  ,
\end{equation}
we have
\[
\inf_{(\varphi, n) \in H^{1}(\mathbb{T}^{2} , \mathbb{R}) \times
H^{1}(\mathbb{T}^{2} , \mathbb{S}^{2})} F_{\varepsilon} (\varphi, n) = \inf_{n
\in H^{1}(\mathbb{T}^{2} , \mathbb{S}^{2})} G_{\e}(n).
\]
We therefore study $G_{\varepsilon}$ in (\ref{energy-planar}) with
$\varphi_{n}$ satisfying (\ref{vp-n}) to obtain the configuration of minimizer
of $F_{\varepsilon}$. This formulation was also used in \cite{L-P} where the
weak critical field was achieved.

The functional $G_{\varepsilon}$ in two dimensions is analogous to a free
energy for micromagnetics studied in \cite{Alouges-Serfaty}. To see this, we
let
\[
m = (m^{\parallel}, n_{3}), \quad m^{\parallel} = n_{\perp}^{\parallel} =
(-n_{2}, n_{1}), \qquad\mbox{and} \qquad\nabla^{\perp} \varphi=(-\varphi_{y},
\varphi_{x}).
\]
Then by setting $H = m^{\parallel} -\nabla^{\perp} \varphi$, $G_{\varepsilon}$
becomes
\begin{equation}
\label{energy-m}\int_{\mathbb{T}^{2} } \varepsilon|\nabla m|^{2} + \frac1
\varepsilon|H|^{2} + \frac{ 1}{ \varepsilon^{\delta}} m_{3}^{2},
\end{equation}
where the demagnetizing field $H : \mathbb{T}^{2} \to\mathbb{R}^{2} $ is a
solution to
\begin{equation}
\label{equation-H}\nabla\times H = 0 \qquad\mbox{and} \qquad\nabla\cdot(H -
\mathbf{m} ^{\parallel})= 0 \quad\mbox{ in } \; \mathbb{T}^{2} .
\end{equation}
This energy is not the same as a micromagnetics energy (\ref{energy-m})
studied in \cite{Alouges-Serfaty} due to the presence of a nonlocal term in
(\ref{energy-m}). The following results, however, will follow directly from
\cite{Alouges-Serfaty}. While the limiting $m$ in \cite{Alouges-Serfaty} is a
divergence-free vector field tangent to $\partial\Omega$, the limit $n$ in our
case is a curl-free $\mathbb{S}^{1}$-valued vector field in $\mathbb{T}^{2} $
whose components satisfy zero mass constraint. We restate the following
theorem from \cite{Alouges-Serfaty} in terms of $n$ with the addition of mass
constraint. Let $X \in[0, \frac{\pi}{2}]$ be a geometric half-angle between
$m^{\parallel}_{+}$ and $m^{\parallel}_{-}$ where $m^{\parallel}_{\pm}$ are
traces on each side of the one-dimensional jump set of $m^{\parallel}$. In
terms of $n$, $X$ is simply the half the angle between $n^{\parallel}_{+}$ and
$n^{\parallel}_{-}$.

\begin{proposition}
\label{prop:lower-bound} Let the sequences $\{\varepsilon_{j}\}_{j
\uparrow\infty} \subset(0, \infty)$, and $\{ n_{j}\}_{j \uparrow\infty}\subset
H^{1}(\mathbb{T}^{2} , \mathbb{S}^{2})$ be such that
\[
\varepsilon_{j} \to0 \qquad\mbox { and } \qquad\{ G_{\varepsilon_{j}}%
(n_{j})\}_{j \uparrow\infty} \qquad\mbox{is bounded.}
\]
Then there exist a subsequence $\{ n_{j_{k}} \}$, $n =(n_{1}, n_{2},0) \in
\cap_{p < \infty} L^{p}(\mathbb{T}^{2} , \mathbb{S}^{1})$, and $\varphi\in
\cap_{p < \infty} W^{1,p}(\mathbb{T}^{2} )$ such that
\begin{equation}
\label{limit}\begin{aligned} & n_{j_k} \to n \quad \mbox{ in } \quad \cap_{p < \infty} L^p(\ensuremath{\mathbb{T}^2}), \\ & n = \nabla \ensuremath{\varphi} \quad \mbox{in} \quad \ensuremath{\mathbb{T}^2}, \qquad |\nabla \ensuremath{\varphi} |=1,\\ & \int_{\TT} n_1 = \int_{\TT} n_2 = 0. \end{aligned}
\end{equation}
If, in addition, $n \in BV(\mathbb{T}^{2} , \mathbb{S}^{1})$, then
\begin{equation}
\label{lower-bound}\liminf_{j \to\infty} G_{\varepsilon_{j}}(n_{j}) \geq
\int_{\Sigma_{n}} A(X) \, d\mathcal{H}^{1}%
\end{equation}
where
\begin{align}
&  A(X) = 4|\sin X - X \cos X| \quad\mbox{for} \quad X \in\left[  0,\frac{\pi
}{4}\right]  ,\label{small-angle}\\
&  A(X) = 4|\left(  X-\frac{\pi}{2}\right)  \cos X - \sin X + \sqrt{2}%
|\quad\mbox{for} \quad X \in\left[  \frac{\pi}{4}, \frac{\pi}{2}\right]  .
\label{large-angle}%
\end{align}

\end{proposition}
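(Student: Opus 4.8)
The plan is to reduce the statement to the corresponding compactness and $\Gamma$-liminf results of \cite{Alouges-Serfaty} via the exact change of variables already set up above, and then to supply the two ingredients that are genuinely new here: the global gradient structure of the limit together with the zero-mass constraint.

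First I would make the rotation $m^{\parallel}=(n^{\parallel})^{\perp}=(-n_{2},n_{1})$, $m_{3}=n_{3}$ precise and verify that $G_{\varepsilon}(n)$ equals the micromagnetics energy \eqref{energy-m} \emph{on the nose}. Since a planar rotation is an isometry and $n_{3}=m_{3}$, one has $|\nabla m|^{2}=|\nabla n|^{2}$ and $m_{3}^{2}=n_{3}^{2}$ pointwise; and with $H=m^{\parallel}-\nabla^{\perp}\varphi_{n}$ one checks $H=(n^{\parallel}-\nabla\varphi_{n})^{\perp}$, so $|H|^{2}=|\nabla\varphi_{n}-n^{\parallel}|^{2}$. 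A direct computation using $\Delta\varphi_{n}=\nabla\cdot n^{\parallel}$ then shows $\nabla\times H=\nabla\cdot n^{\parallel}-\Delta\varphi_{n}=0$ and $\nabla\cdot(H-m^{\parallel})=-\nabla\cdot\nabla^{\perp}\varphi_{n}=0$, i.e. $H$ is exactly the demagnetizing field \eqref{equation-H}. This identifies the two energies and lets me import the conclusions of \cite{Alouges-Serfaty} verbatim.

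Next I would invoke the compactness theorem of \cite{Alouges-Serfaty}: along a sequence with $G_{\varepsilon_{j}}(n_{j})$ bounded, the term $\varepsilon^{-\delta}m_{3}^{2}$ forces $m_{3}=n_{3}\to0$, so the limit is $\mathbb{S}^{1}$-valued, and the limiting $m$ is divergence-free; rotating back, $n^{\parallel}$ is curl-free. The genuinely new point is to upgrade this to a \emph{global} gradient with vanishing mass. Here I would use that $\|n_{j}^{\parallel}-\nabla\varphi_{n_{j}}\|_{L^{2}}^{2}\le\varepsilon_{j}\,G_{\varepsilon_{j}}(n_{j})\to0$ together with $\int_{\TT}\nabla\varphi_{n_{j}}=0$ (periodicity) to get $\int_{\TT}n_{j}^{\parallel}\to0$, hence $\int_{\TT}n_{1}=\int_{\TT}n_{2}=0$ in the limit after passing through the $L^{1}$ convergence $n_{j}\to n$. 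By Hodge decomposition on the torus a curl-free field with zero mean carries no harmonic (constant) part, so $n=\nabla\varphi$ globally with $|\nabla\varphi|=1$, which is precisely \eqref{limit}.

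Finally, for \eqref{lower-bound} I would appeal to the local blow-up lower bound of \cite{Alouges-Serfaty} around the one-dimensional jump set $\Sigma_{n}$. Because the transition-layer analysis is purely local near $\Sigma_{n}$, it is insensitive to the replacement of a bounded domain by the torus, and the wall-energy density $A(X)$ depends only on the jump angle $X$, which the global rotation $n\mapsto m$ preserves; hence the bound transfers with the same $A(X)$. The main obstacle I anticipate is not the hard analysis, which is outsourced to \cite{Alouges-Serfaty}, but carefully matching the two variational problems: verifying the energy identity exactly, confirming that the nonlocal demagnetizing field generated by $\varphi_{n}$ coincides with the one solving \eqref{equation-H}, and correctly handling the nontrivial topology of $\TT$, where the mass constraint is exactly what promotes the local curl-free structure to a single-valued potential $\varphi$.
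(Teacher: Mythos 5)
Your proposal follows essentially the same route as the paper: the proof there is a two-line appeal to Lemma 2.1 of \cite{Alouges-Serfaty} for compactness and Theorem 1 of \cite{Alouges-Serfaty} for the lower bound, with the mass constraint attributed to $n=\nabla\varphi$ and periodicity of $\varphi$. You simply fill in the details the paper leaves implicit (the explicit verification that the rotated energy matches \eqref{energy-m}, and the observation that on $\mathbb{T}^{2}$ one needs the vanishing mean to kill the harmonic part before curl-free can be upgraded to a global gradient), which is if anything a more careful ordering of the same argument.
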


\begin{proof}
The compactness result may follow directly from Lemma 2.1 in
\cite{Alouges-Serfaty} and the mass constraints of $n_{1}$ and $n_{2}$ in
(\ref{limit}) result from $n = \nabla\varphi$ and periodicity of $\varphi$.
The lower bound inequality (\ref{lower-bound}) follows from Theorem 1 of
\cite{Alouges-Serfaty}.
\end{proof}

Note that constant configurations are not allowed due to the zero mass
constraint in (\ref{limit}) and periodicity of $n$. Therefore, the line
singularity appears, which is also observed in \cite{chevron}. Here, we
consider horizontal or vertical lines for the one-dimensional jump set rather
than diagonal lines, in order to minimize the arc length of the jump set. One
can see that $n$ has at least two internal jumps due to the periodic boundary
condition on $n$. Therefore we should consider two situations, squares with
two jumps on both horizontal and vertical 1d tori (Figure \ref{fig:1d-2d} (a))
and only vertical strips (or horizontal strips) with two parallel 1d-tori
(Figure \ref{fig:1d-2d} (b)). For the latter case, we must have $180^{\circ}$
wall to ensure the mass constraint (\ref{limit}).

\begin{figure}[th]
\centering
\epsfig{file=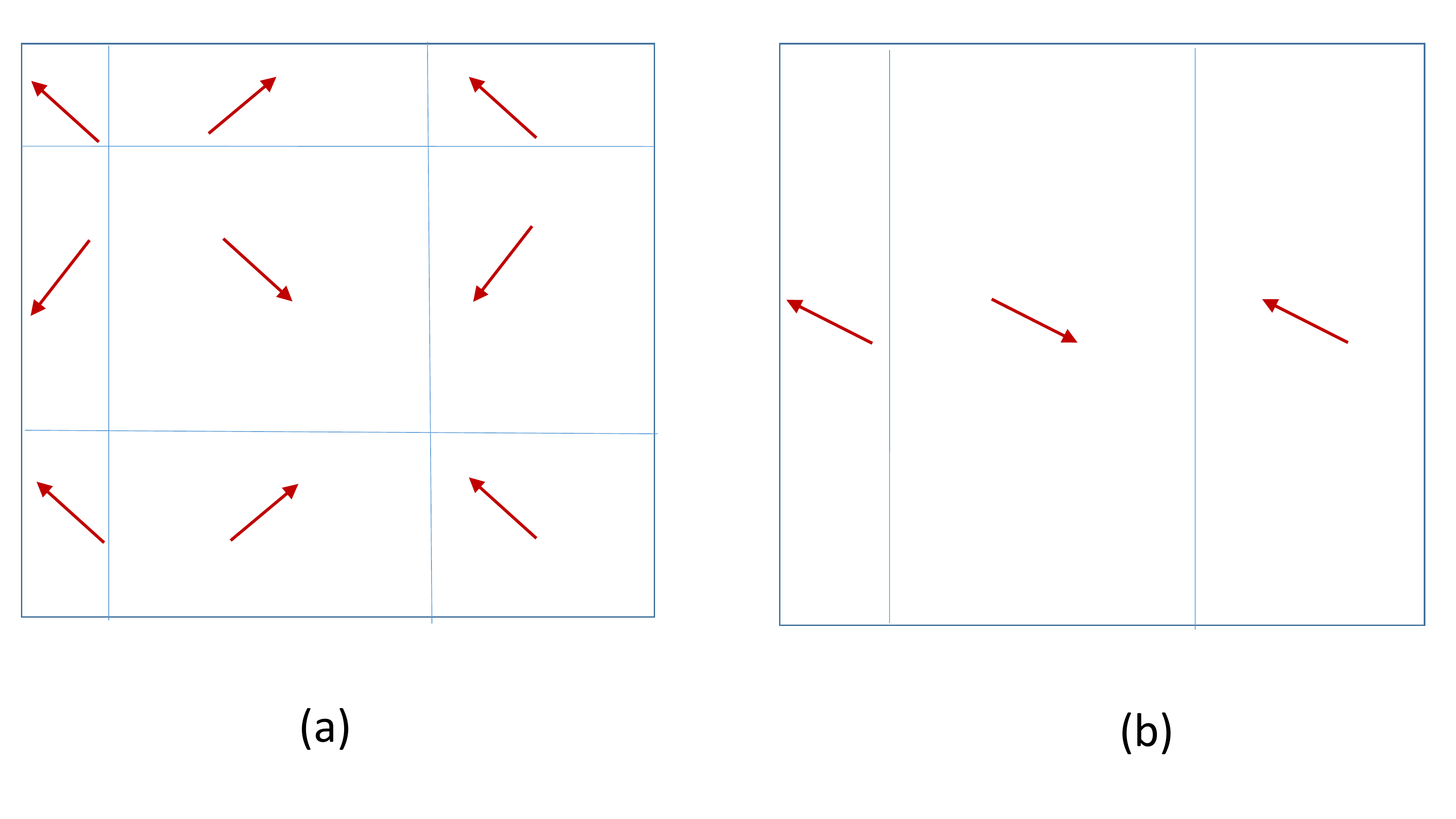,width=0.6\linewidth,clip=} \caption{2d square pattern vs.
1d stripe pattern}%
\label{fig:1d-2d}%
\end{figure}

By applying Proposition \ref{prop:lower-bound}, we would like to compare these two
configurations, 2d square pattern (Figure \ref{fig:1d-2d} (a)) and 1d stripe
configuration (Figure \ref{fig:1d-2d} (b)). For the 1d stripes, the formula
(\ref{large-angle}) with $X= \pi/2$ gives the lower bound
\begin{equation}
\label{energy-1d}G_{\varepsilon}^{1d} \geq8(\sqrt{2}-1) \approx3.3.
\end{equation}
On the other hand, for the 2d square pattern, the formula (\ref{small-angle})
with $X=\pi/4$ gives the better lower bound
\begin{equation}
\label{energy-2d}G_{\varepsilon}^{2d} \geq2 \sqrt{2} (4-\pi) \approx2.4.
\end{equation}
It was proved in \cite{Serfaty-S1} that the lower bound in (\ref{small-angle})
can be achieved by one dimensional structure in the wall for jumps less than
or equal to $\pi/2$. Thus the lower bound in (\ref{energy-2d}) is optimal and
hence we can conclude that 2d square pattern gives the lower energy than 1d
stripe structure.

\section{Numerical Simulations}

\label{sec:Numerics}

\subsection{Complex de Gennes energy}

\label{sec:numerics-complex} We consider the gradient flow (in $L^{2})$ of the
energy (\ref{energy}) and study the behavior of the solutions. The gradient
flow equations are
\begin{equation} \label{GradientFlow}
\begin{aligned}
\frac{\partial\mathbf{n} }{\partial t}  &  = \Pi_{n} \left(  \varepsilon\Delta
n - c \Im{ [\psi(\nabla\psi)^{*} ]} + \tau\left[  (n \cdot\mathbf{e}_{1})
\mathbf{e}_{1} + (n \cdot\mathbf{e}_{2}) \mathbf{e}_{2} \right]  \right)
,\\
\frac{\partial\psi}{\partial t}  &  = c \Delta\psi-2c i n \cdot\nabla\psi- i c
(\nabla\cdot n) \psi- \frac{1}{\varepsilon} \psi+ \frac{g}{\varepsilon}
(1-|\psi|^{2})\psi,
\end{aligned}
\end{equation}
where we have defined, for a given vector $f \in\mathbb{R}^{3}$, the
orthogonal projection onto the plane orthogonal to the vector $n$ as
\[
\Pi_{n}(f) = f-(n \cdot f) n.
\]
This projection appears as a result of the constraint $n \in\mathbb{S}^{2}$.
This method has been used for smectic A liquid crystals \cite{G-J2, G-J3,
G-J4}. As initial condition, we consider a small perturbation from the
undeformed state. More precisely, for all $(x,y,z) \in\Omega$,
\begin{align*}
n (x,y,z,0)  &  = \frac{(\eta u_{1}, \eta u_{2}, 1+ \eta u_{3})}{|(\eta u_{1},
\eta u_{2}, 1+ \eta u_{3}) |},\\
\psi(x,y,z,0)  &  = e^{iz/c \varepsilon} + \eta\psi_{0},
\end{align*}
where a small number $\eta= 0.1$ and $u_{1}, u_{2},u_{3}$ and $\psi_{0}$ are
arbitrarily chosen. We impose strong anchoring condition for the director
field and Dirichlet boundary condition on $\psi$ at the top and the bottom
plates;
\begin{equation}
\label{boundary-n}n(x, y, \pm\frac{\pi}{2}, t) = \mathbf{e}_{3},
\quad\mbox{and} \quad\psi(x, y, \pm\frac{\pi}{2}, t) = e^{iz/c \varepsilon}.
\end{equation}
Periodic boundary conditions are imposed for both $n$ and $\psi$ in the $x$
and $y$ directions.

We use a Fourier spectral discretization in the $x$ and $y$ directions, and
second order finite differences in the $z$ direction. The fast Fourier
transform is computed using the FFTW libraries \cite{F-J}. For the temporal
discretization, we combine a projection method for the variable $n$
\cite{E-W}, with a semi-implicit scheme for $\psi$: Given $(\psi^{k}, n^{k})$,
we solve
\begin{align}
\frac{\mathbf{n} ^{*} -\mathbf{n} ^{k}}{\Delta t}  &  = \varepsilon
\Delta\mathbf{n} ^{*} -c \Im{[\varphi^{k} (\nabla\psi^{k})^{*}]}
+ \tau\left[  (\mathbf{n} ^{k} \cdot\mathbf{e}_{1})
\mathbf{e}_{1} + (\mathbf{n} ^{k} \cdot\mathbf{e}_{2}) \mathbf{e}_{2} \right]
,\label{scheme-n}\\
\mathbf{n} ^{k+1}  &  = \frac{\mathbf{n} ^{*}}{|\mathbf{n} ^{*}|}%
,\label{projection}\\
\frac{\psi^{k+1} - \psi^{k}}{\Delta t}  &  = c \Delta\psi^{k+1} - 2 c i
\mathbf{n} ^{k+1} \cdot\nabla\psi^{k} - i c (\nabla\cdot\mathbf{n} ^{k+1} )
\psi^{k} - \frac{1}{\varepsilon} \psi^{k+1}\nonumber\\
\label{scheme-vp}\\[-2.ex]
&  + \frac{g}{\varepsilon} (1-|\psi^{k}|^{2}) \psi^{k}.\nonumber
\end{align}
The second step (\ref{projection}) ensures that $|\mathbf{n} ^{k+1}| = 1$ at
each grid point. Note that $|\mathbf{n} ^{*}| \neq0$ in (\ref{projection})
since we consider the case where there are no point defects in the liquid
crystal. The consistency and convergence of the projection method are given in
\cite{E-W}. The method is first order accurate in time and second order
accurate in space due to the first order accuracy of the projection method
(\ref{scheme-n})-(\ref{projection}). To solve the implicit system, we perform
a discrete Fourier transform in the $x$ and $y$ direction. The resulting
tridiagonal systems in the $z$ direction are solved using Gauss elimination.

\begin{figure}
\centering
\begin{tabular}
[c]{ccc}%
\epsfig{file=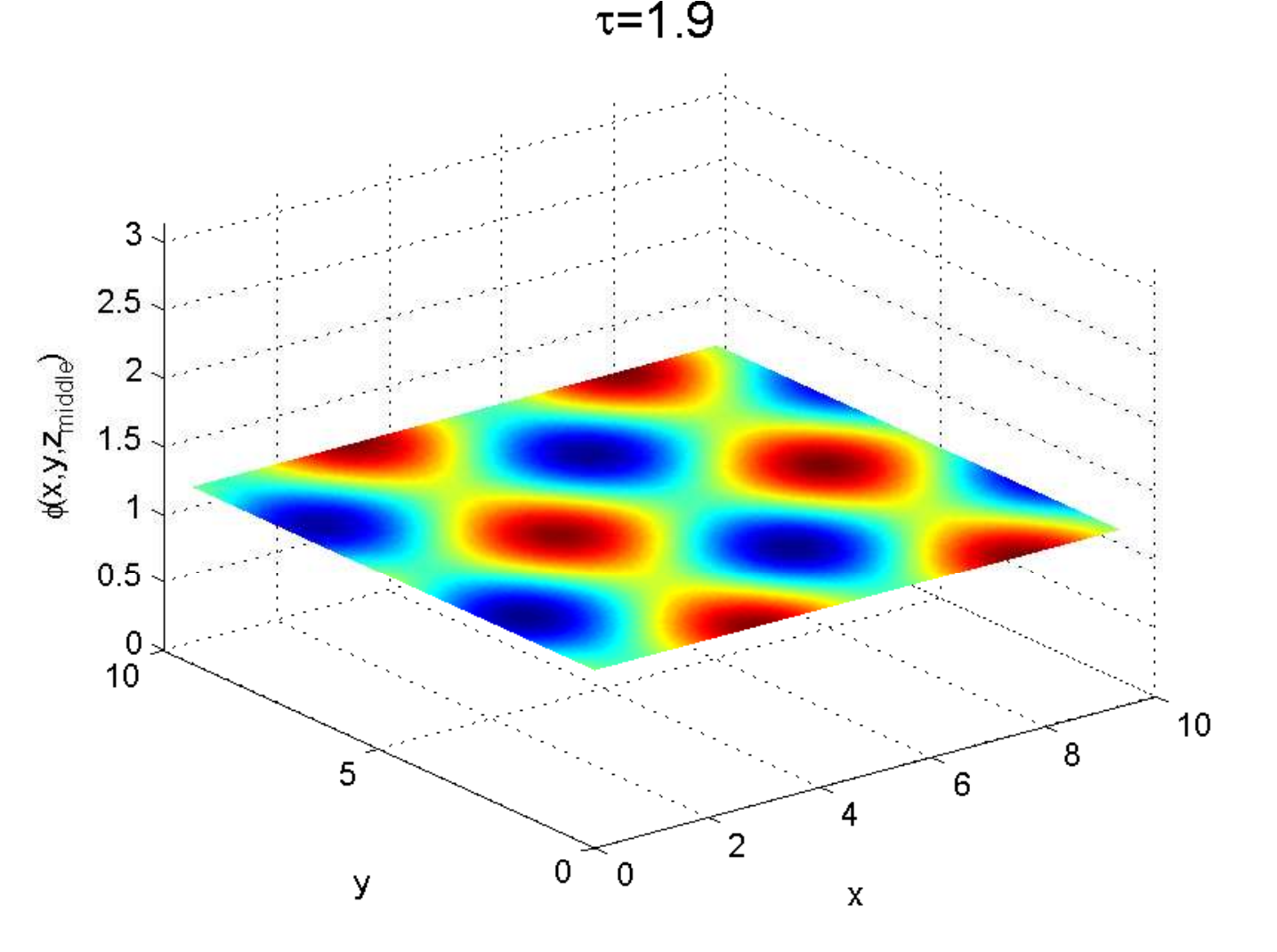,width=0.32\linewidth,clip=} &
\epsfig{file=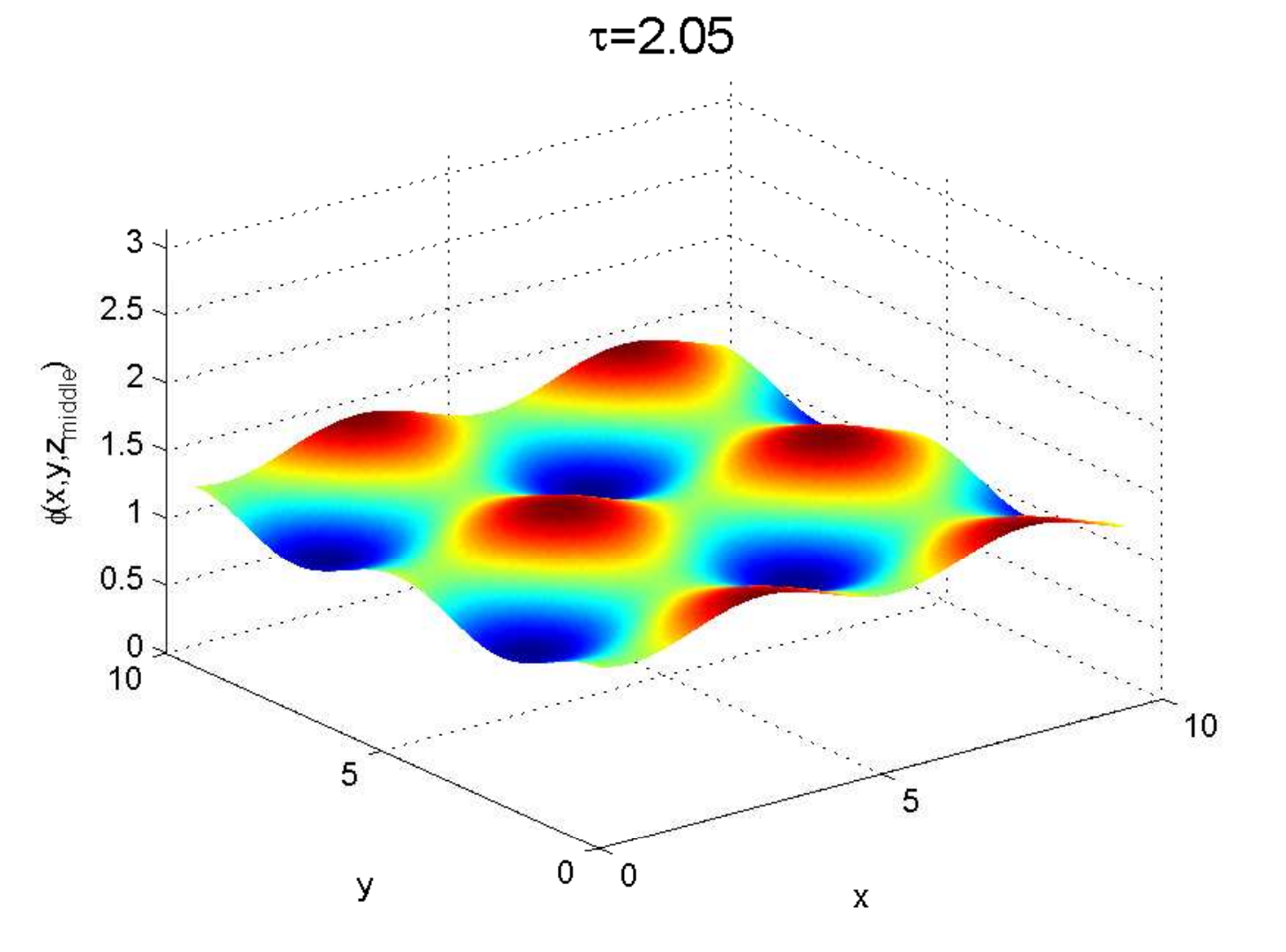,width=0.32\linewidth,clip=}&
\epsfig{file=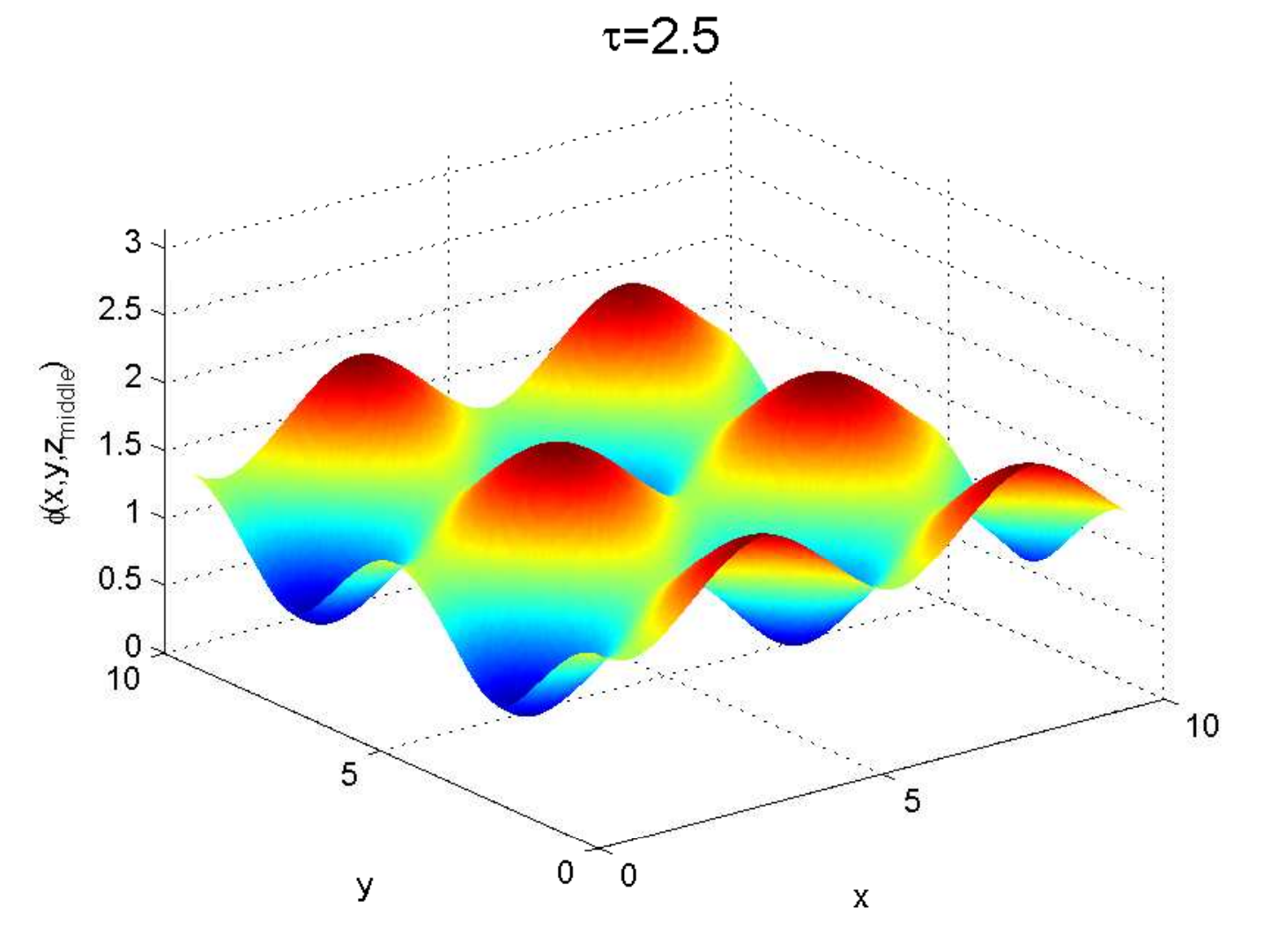,width=0.32\linewidth,clip=}\\
\end{tabular}
\caption{Numerical simulations with Dirichlet boundary conditions on $\psi$ on the bounding plates at various field strengths in the middle of the domain, $z=0$. The onset of layer undulations are observed in the second column. } \label{fig:layerxy}%
\begin{tabular}
[c]{cc}%
~ & ~\\
~ & ~\\
\epsfig{file=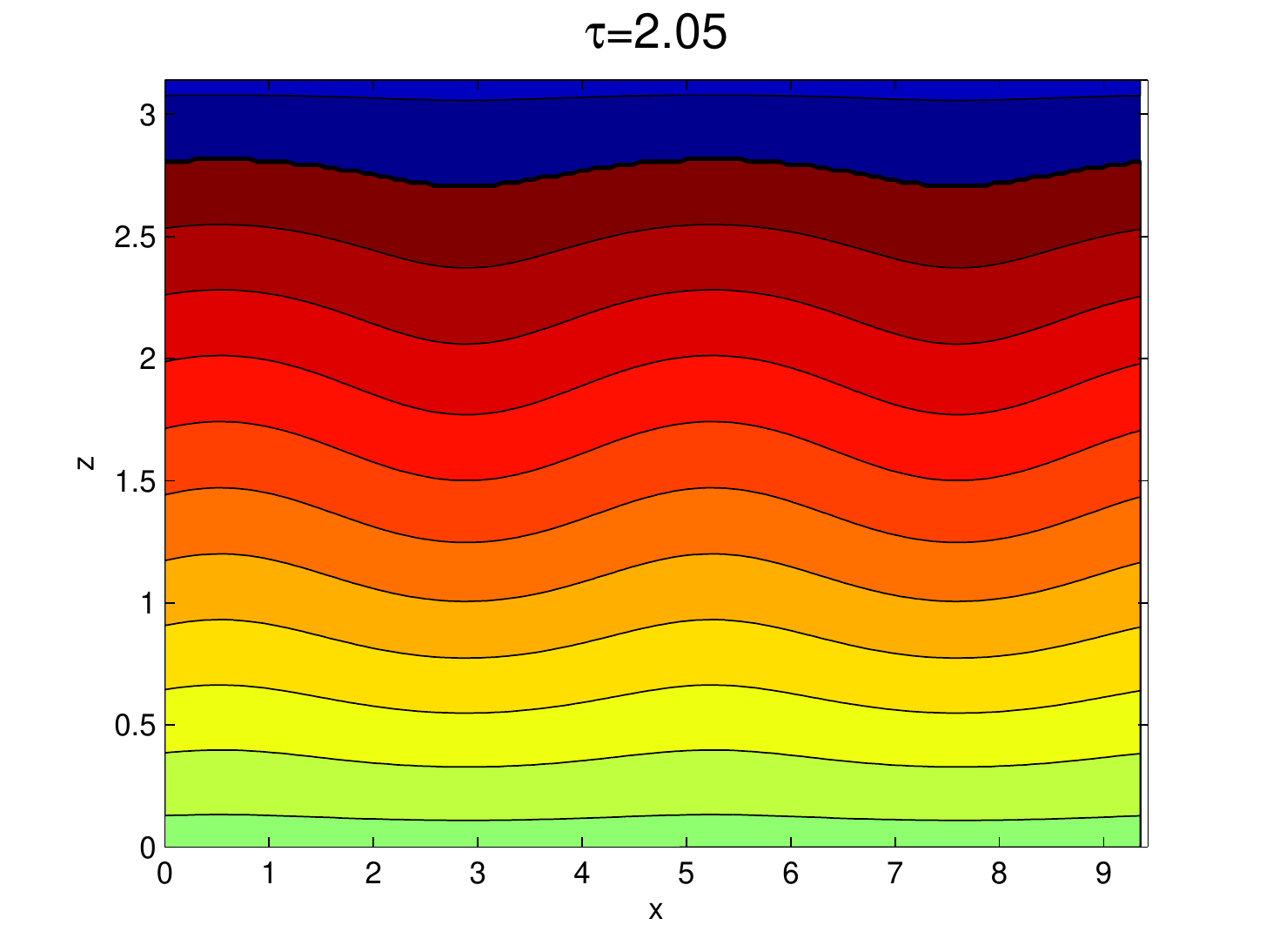,width=0.40\linewidth,clip=} &
\epsfig{file=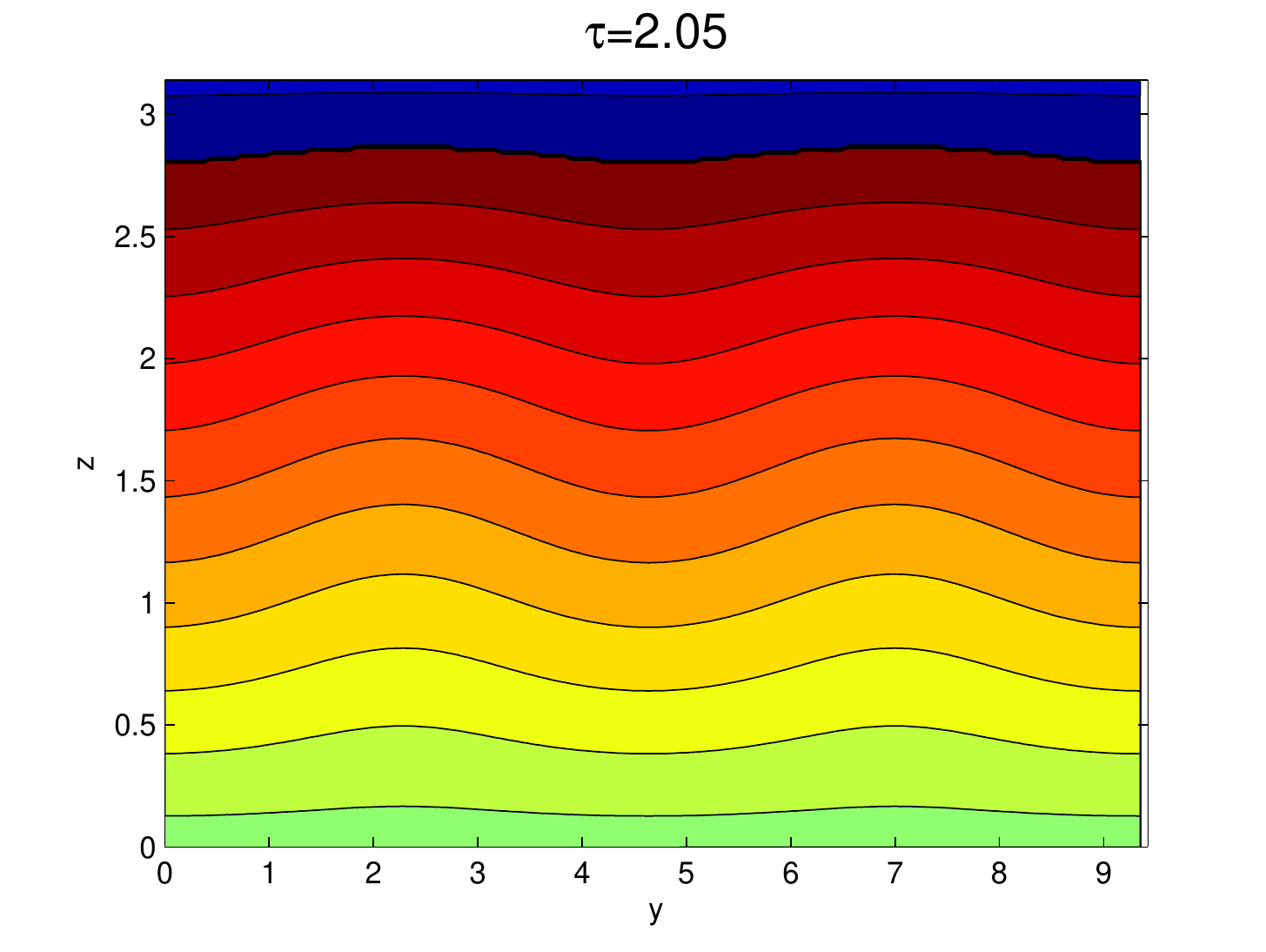,width=0.40\linewidth,clip=}\\
\end{tabular}
\caption{The layer structure on the vertical cross sections, $y=0$ on the left column and $x=0$ on the right column, near the onset of the undulations.  }\label{fig:layerz}%
\end{figure}

We take the domain size $L_{1}= L_{2}= 3 d_{0}$, i.e., in the case of $a=b$, and $\varepsilon= 0.3$. The
number of grid points in the $x$, $y$ and $z$ directions are all $128$. Here
we use parameters
\[
K=0.001, \quad C=0.01,\quad g_{0}=0.5, \quad r=0.25, \quad\mbox{and} \quad
q=10,
\]
and then we obtain from (\ref{parameters})
\[
c = \sqrt{5}, \quad\mbox{and} \quad g=0.25.
\]

We have solved this system in \cite{G-J4} to understand the minimizer at a
various field strength in a two dimensional domain, $\Omega= (0,l)^{2}$ and $n
\in\mathbb{S}^{1}$. Here we consider a three dimensional domain with $n
\in\mathbb{S}^{2}$. In Figure~\ref{fig:layerz} we show the layer structures in the middle of the sample ($z=0$) in response to
various magnetic field strengths $\tau$. In figures, we show the contour maps of $\psi$ since the level sets of
$\psi$ represent the smectic layers. One can see that the undeformed state is
stable for values of $\tau$ below the critical field $\tau_{c} \sim 2 $ as seen in the
left column of Figure~\ref{fig:layerxy}. If $\tau$ increases and reaches $\tau_{c}$,
layer undulations of 2d square patterns occur (the second column of
Figure~\ref{fig:layerxy}). The estimates of the critical field, $\tau_{c}
\sim 2$, is consistent with our analytical result (\ref{first-eigenvalue}). The third column of Figure~
\ref{fig:layerxy} shows that the amplitude of undulations grows in the
increase of the field strengths.
Figure \ref{fig:layerz} depicts the contour map of the phase of $\psi$ in two vertical sections, $y=0$ and $x=0$. As in the classic Helfrich-Hurault theory and result from two dimension \cite{G-J2}, the figures show that the maximum undulations occur in the middle of the domain and layer perturbations decreases as approaching the bounding plates.
\subsection{Planar description}

\label{numerics-planar} In this section, we present numerical simulations of
liquid crystal equilibrium states which minimize (\ref{energy-planar}) to
support analytical results given in section \ref{section:planar}. We consider
the gradient flow (in $L^{2})$ of the energy (\ref{energy-planar}) in
$\mathbb{T}^{2} = (0, 1)^{2} $ with $\delta= 1.5$. The gradient flow equations
are
\begin{equation}\label{GradientFlow2}
\begin{aligned}
\frac{\partial\varphi}{\partial t}  &  = \frac{1}{\varepsilon} \left(
\Delta\varphi- \nabla\cdot n \right)  ,\\
\frac{\partial n}{\partial t}  &  = \Pi_{n} \left(  \varepsilon\Delta n +
\frac{1}{\varepsilon} \left(  \nabla\phi- n\right)  - \frac{1}{\varepsilon
^{\delta}} (n \cdot\mathbf{e}_{3}) \mathbf{e}_{3} \right) ,
\end{aligned}
\end{equation}
where we have used the same notation as in the section
\ref{sec:numerics-complex}.

As initial condition, we consider, for all $(x,y) \in\mathbb{T}^{2} $,
\begin{align*}
\mathbf{n} (x,y,0)  &  = \frac{(\eta u_{1}, \eta u_{2}, 1+ \eta u_{3})}{|(\eta
u_{1}, \eta u_{2}, 1+ \eta u_{3}) |},\\
\varphi(x,y,0)  &  = \eta\varphi_{0},
\end{align*}
where a small number $\eta= 0.1$, $u_{1}, u_{2},u_{3}$ and $\varphi_{0}$ are
arbitrarily chosen.

\begin{figure}[ptb]
\centering
\begin{tabular}
[c]{cc}%
\epsfig{file=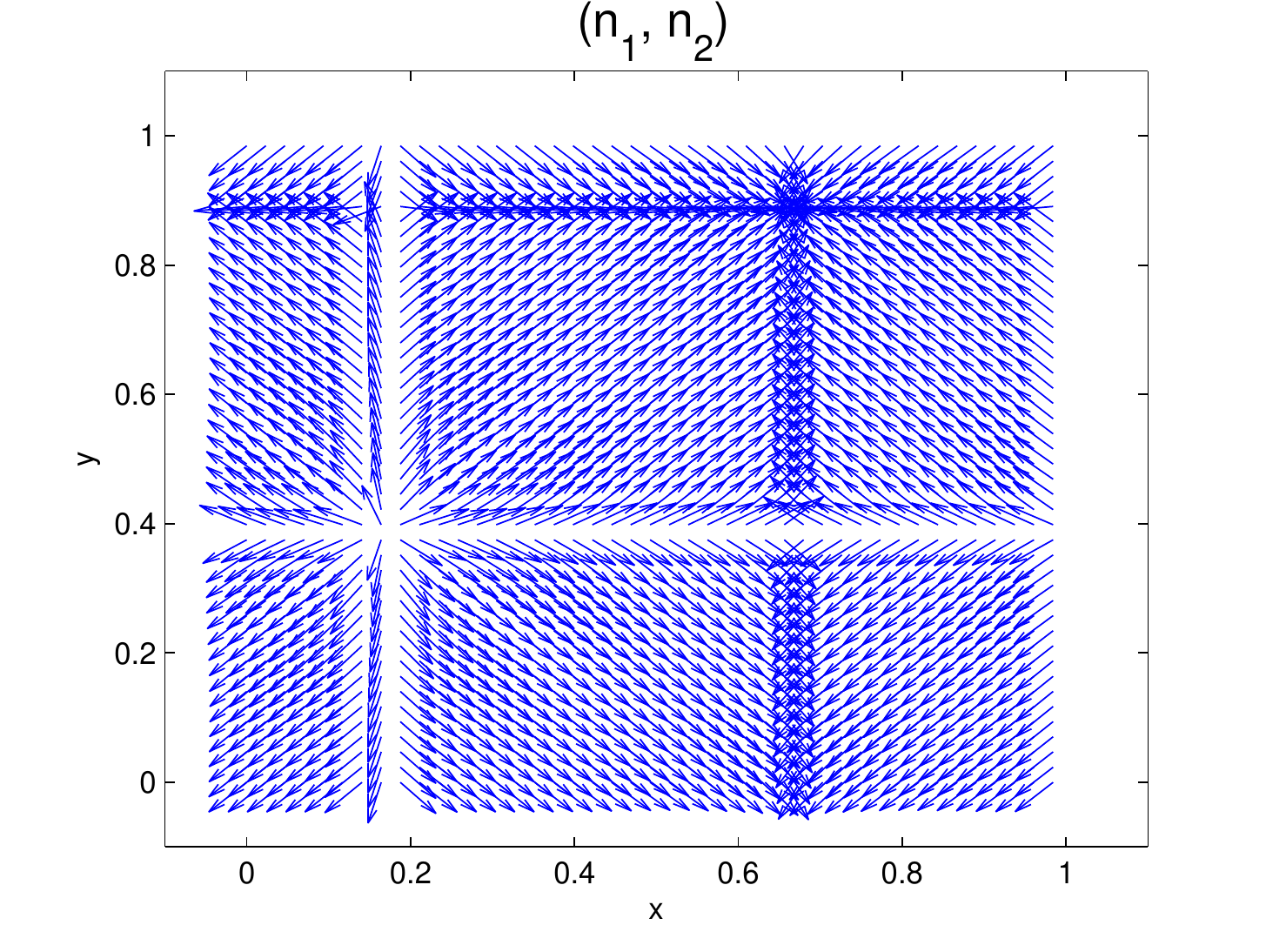,width=0.45\linewidth,clip=} &
\epsfig{file=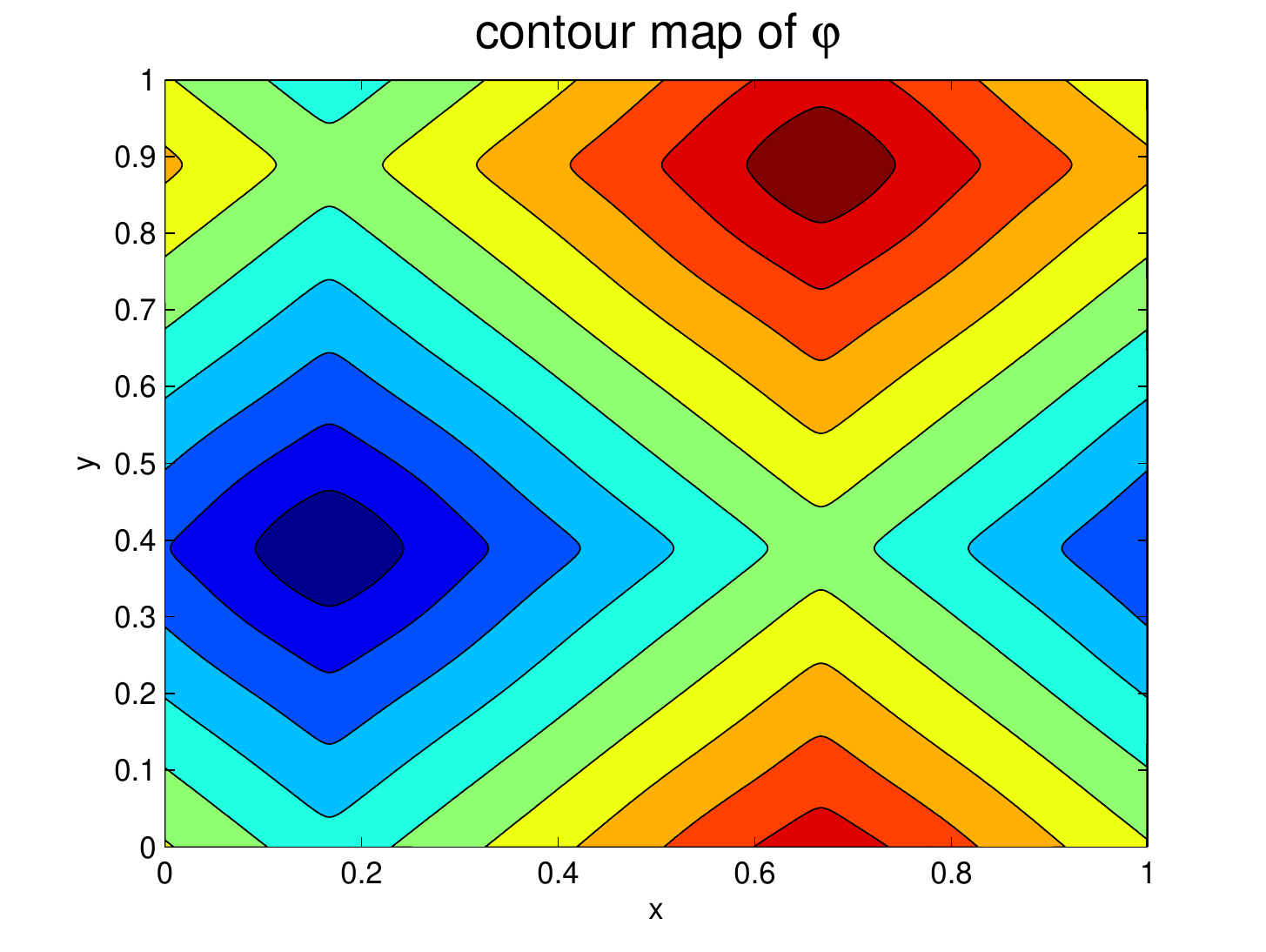,width=0.43\linewidth,clip=}\\
\hskip -0.1cm \textbf{(a)} & \hskip -0.1cm \textbf{(b)}\\[0.2ex]
& \\
\epsfig{file=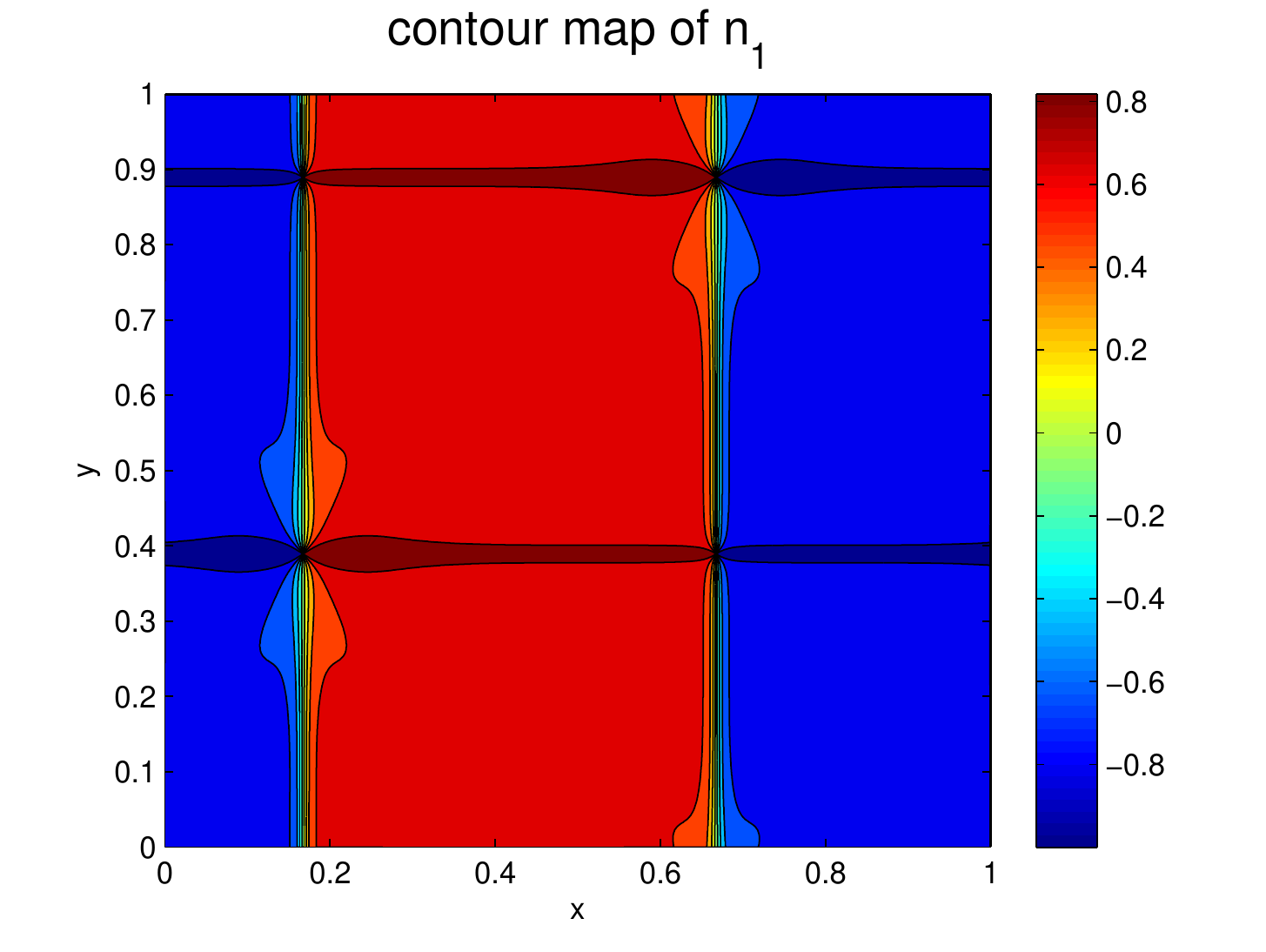,width=0.45\linewidth,clip=} &
\epsfig{file=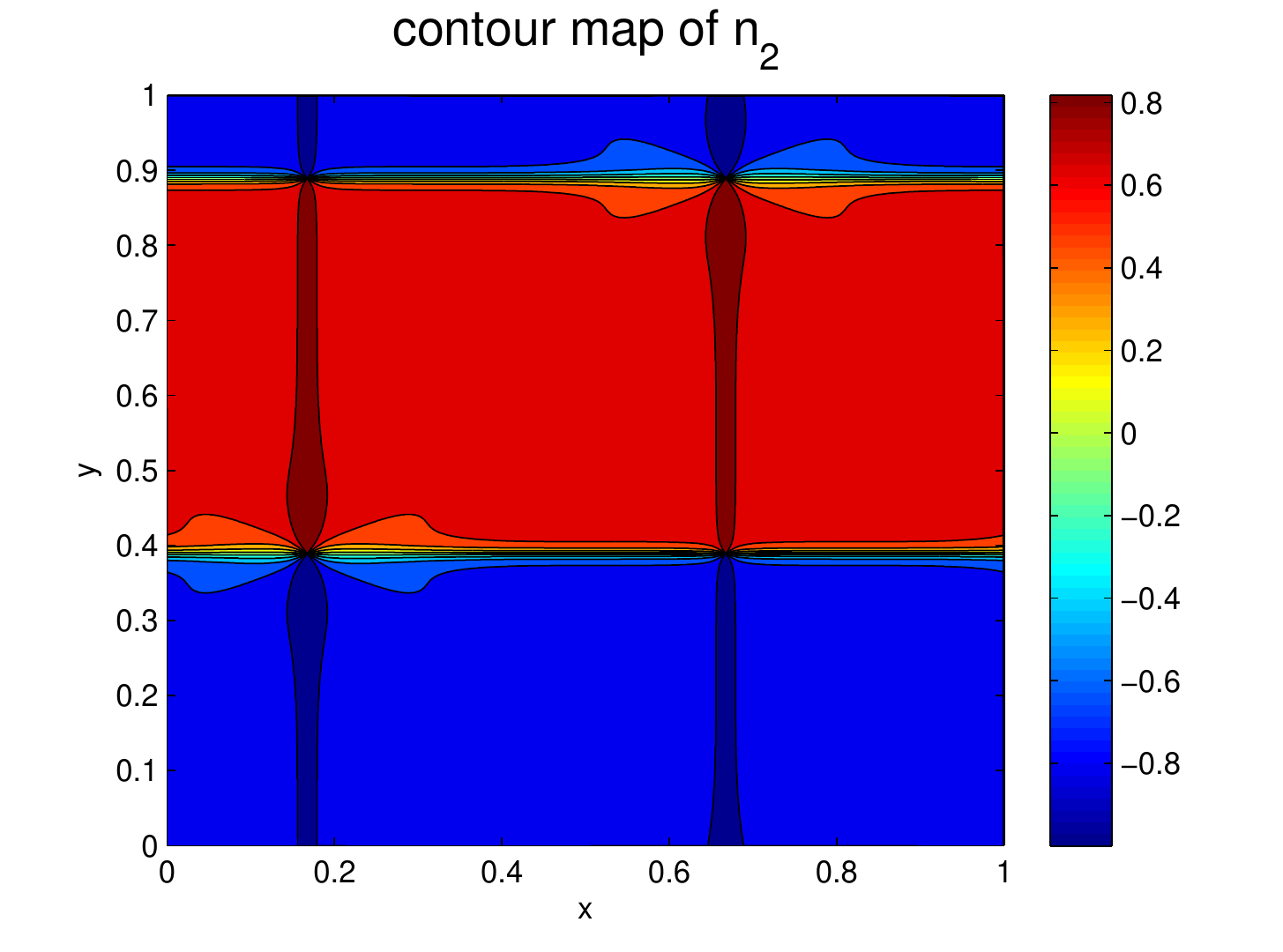,width=0.45\linewidth,clip=}\\
\hskip -0.1cm \textbf{(c)} & \hskip -0.1cm \textbf{(d)}\\[0.2ex]
& \\
&
\end{tabular}
\caption{2D planar configuration: director in the plane. vector field plot of
$\mathbf{n} ^{\parallel}$ and contour map of $\varphi$ in the first row,
contour maps of $n_{1}$ and $n_{2}$ in the second row. }%
\label{fig:planar}%
\end{figure}

We take $\varepsilon= 0.005$ and $1024$ grid points in the $x$ and $y$
direction, which ensures that the transition layers are accurately resolved.

In Figure \ref{fig:planar} (a) and (b) we illustrate the director and layer
configuration and confirm the analytical results in section
\ref{section:planar}. Numerical simulations illustrate 2d square patterns as
an equilibrium state. (See Figures \ref{fig:1d-2d} (a) and \ref{fig:planar}%
(a).) Furthermore, the contour maps of $n_{1}$ and $n_{2}$ in Figure
\ref{fig:planar} (c) and (d) clearly demonstrate the zero mass constraint
given in (\ref{limit}).

The numerical values of energy (\ref{energy-planar}) at the equilibrium state
with various values of $\varepsilon$ are calculated and written in the table
\ref{table:energy}. These values agree with the lower bound given in
(\ref{energy-2d}).

\begin{table}[h]
\caption{Computation of the energy (\ref{energy-planar}) with the various values of $\e$ } \label{table:energy}
\begin{center}
\begin{tabular}{ >{\centering\arraybackslash}m{1.0in}  >{\centering\arraybackslash}m{.5in} >{\centering\arraybackslash}m{.5in} >
{\centering\arraybackslash}m{.5in}  >{\centering\arraybackslash}m{.5in}}
\toprule[0.5pt]
 $\e$ \; & $ \; 0.01 \; $ & $ \; 0.005 \; $ & \; $0.003$ \; & \; $ 0.001$ \; \\
\midrule
Energy (\ref{energy-planar}) & \; 3.50 \;& \;2.82\; &\;2.56\; & \;2.48\; \\
\bottomrule[0.5pt]
\end{tabular}
\end{center}
\end{table}


\end{document}